\documentclass[12pt]{amsart}
\usepackage{amssymb,latexsym,amsmath,enumitem,mathrsfs,geometry}
\usepackage{thmtools}
\usepackage{fullpage}
\usepackage{fancyhdr}
\usepackage{xcolor}
\usepackage{bbm}
\usepackage{stmaryrd}
\usepackage{mathrsfs}
\usepackage{lineno}
\usepackage{diagbox}
\usepackage{array}
\usepackage{tikz}
\usetikzlibrary{arrows}
\usetikzlibrary{positioning, arrows.meta, shapes.geometric}
\usepackage{float}
\usepackage{comment}
\usepackage{mathtools}
\usepackage{amsthm}
\usepackage{capt-of}
\usepackage{hyperref} 
\hypersetup{
    colorlinks=false,
    pdfborder={0 0 1},
    linkbordercolor={1 0 0},
    citebordercolor={0 1 0},
    urlbordercolor={0 1 1},
    filebordercolor={1 0 1},
    pdfpagemode=FullScreen}

\newtheorem{theorem}{Theorem}[section]
\newtheorem*{theorem*}{Theorem}

\newtheorem*{remark*}{Remark}

\newtheorem{corollary}[theorem]{Corollary}
\newtheorem{proposition}[theorem]{Proposition}

\theoremstyle{remark}
\newtheorem{definition}[theorem]{Definition}
\newtheorem{remark}[theorem]{Remark}
\theoremstyle{remark}
\newtheorem{example}[theorem]{Example}

\begin{document}

\numberwithin{equation}{section}

\pagestyle{plain}

\begin{center}
{\normalsize\bfseries SEGMENTS AND CONVEXITY IN METRIC SPACES\par}

\vspace{1.2em}

{\small TIAN VLASIC\par}

\vspace{0.15em}

{\footnotesize

Department of Mathematics, Harvard University\par

Cambridge, MA 02138, USA\par

\href{mailto:tianvlasic@college.harvard.edu}{tianvlasic@college.harvard.edu}\par

}

\vspace{0.3em}

{\footnotesize \today\par}

\end{center}
\vspace{0.8em}

\begin{center}
\begin{minipage}{0.85\textwidth}
\small
\noindent\textbf{Abstract.}
This paper aims both to provide a unified introduction to $d$-convexity and to contribute new structural results on metric segments and convexity in metric spaces. After developing the basic theory of metric segments, we study their geometric and topological properties, establishing several structural results, and show that metric segments realize arbitrary closed subsets after the metric is replaced by a topologically equivalent and bounded one. We then examine metrically, Menger, and strictly convex spaces, obtaining new characterizations of strict convexity in terms of an order structure on metric segments and their isometric embeddability into $\mathbb{R}$. Additionally, we offer an exposition of the relationship between $d$-convexity and other notions of convexity in metric spaces from the standpoint of axiomatic convexity. Finally, we characterize metric spaces in which metric segments are trivial via a "local snowflaking" condition.
\end{minipage}
\end{center}

\vspace{1em}

\begin{center}
\begin{minipage}{0.85\textwidth}
\small
\noindent\textbf{Mathematics Subject Classification:} Primary 51F99; Secondary 52A01, 54E35.

\medskip

\noindent\textbf{Keywords:} Metric spaces, metric segments, $d$-convexity, axiomatic convexity.

\end{minipage}

\end{center}

\vspace{1em}

\section{Introduction} \label{sec1}
Convex sets are mainly studied in the context of  real vector spaces and are defined as subsets closed under convex combinations of their elements. Equivalently, a subset of a real vector space is convex if, given any pair of its elements, it contains the \emph{line segment} joining them. \emph{Linear convexity} is foundational for many fields, including optimization, functional analysis, and convex geometry. However, there are many structures other than real vector spaces for which it is natural and useful to define analogous notions of convexity of sets: notable examples include partially ordered sets, where a subset $C \subseteq X$ of a partially ordered set $(X,\preceq)$ is called convex if $z \in C$ holds for all $x,y \in C$ and $z \in X$ with $x \preceq z \preceq y$, and, importantly to us, metric spaces.\\

\noindent There are a few inequivalent ways in which one can define a notion of convexity of sets in metric spaces (this will be discussed in more detail in Section~\ref{sec4}). That being said, this paper favors \emph{$d$-convexity} as the most natural notion of convexity of sets in metric spaces, which was first introduced by Menger \cite{menger1928untersuchungen} in 1928. Specifically, given a metric space $(X,d)$ and points $x,y$, Menger first defines the \emph{metric segment} joining $x,y$ as the set

$$[x,y]= \left\{ z \in X: d(x,z)+d(z,y)=d(x,y)\right\}.$$

\vspace{0.3cm}

\noindent (A useful intuition for the reader is that metric segments are degenerate cases of the triangle inequality). For the sake of terminology, we will refer to the points $x,y$ as the \emph{endpoints} of the metric segment $[x,y]$. Also, when there is ambiguity, we will specify the metric or space in which the metric segments are being considered via a subscript. Now, we say that a subset $C \subseteq X$ is \emph{$d$-convex} (hereafter, simply, \emph{convex}) if $[x,y] \subseteq C$ holds for all $x,y \in C$. With this definition, Menger mimics line segments in real vector spaces and linear convexity. In fact, $d$-convexity and linear convexity coincide in a large class of real normed spaces called \emph{strictly convex} spaces, which are normed spaces for which the unit closed ball is a strictly convex set (the class of strictly convex spaces is known to include, among other spaces, real inner product spaces). More specifically, we have the following proposition:\\

\begin{proposition}\label{prop1.1}
Let $V$ be a normed  space. Then, $V$ is strictly convex if and only if $[u,v]=\left\{ (1-t)u+tv: t \in [0,1]\right\}$ holds for all $u,v \in V$.
\end{proposition}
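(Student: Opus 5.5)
We prove the two implications separately. Write $L(u,v)=\{(1-t)u+tv: t\in[0,1]\}$. The inclusion $L(u,v)\subseteq [u,v]$ holds in every normed space. Indeed, for $z=(1-t)u+tv$ we have $z-u=t(v-u)$ and $v-z=(1-t)(v-u)$. Hence $\|z-u\|+\|v-z\|=t\|v-u\|+(1-t)\|v-u\|=\|v-u\|$. So the whole content of the proposition is whether the reverse inclusion $[u,v]\subseteq L(u,v)$ holds. Throughout, we use strict convexity in the form: if $\|a\|=\|b\|=1$ and $a\neq b$, then $\|(a+b)/2\|<1$. An equivalent form is that equality $\|a+b\|=\|a\|+\|b\|$ with $a,b\neq 0$ forces $a=\lambda b$ for some $\lambda>0$. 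We derive this second form along the way.

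For ($\Rightarrow$), assume $V$ is strictly convex and let $z\in[u,v]$. Set $a=z-u$ and $b=v-z$, so that $a+b=v-u$ and $\|a\|+\|b\|=\|a+b\|$. If $a=0$ or $b=0$, then $z=u$ or $z=v$ and we are done. Otherwise put $\alpha=\|a\|$, $\beta=\|b\|$, $s=\alpha/(\alpha+\beta)$, and write
\[
\frac{a+b}{\alpha+\beta}=s\,\frac{a}{\alpha}+(1-s)\,\frac{b}{\beta}.
\]
The left side has norm $1$. If $a/\alpha\neq b/\beta$, strict convexity of the unit ball gives that every proper convex combination of two distinct unit vectors has norm $<1$. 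This follows from the midpoint version together with convexity of the norm: the open segment contains the midpoint, and by the triangle inequality the norm along the segment is a convex function that is at most $1$ at the ends and $<1$ at the midpoint, hence $<1$ throughout the interior. This is a contradiction, so $a/\alpha=b/\beta$. Then $z-u=\frac{\alpha}{\alpha+\beta}(v-u)$, i.e.\ $z=(1-s)u+sv$ with $s\in[0,1]$. I expect this step, upgrading midpoint strict convexity to all interior points of a chord, to be the main technical point. It is short via the convexity argument just described.

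For ($\Leftarrow$), argue by contraposition. Suppose $V$ is not strictly convex, so there are unit vectors $x\neq y$ with $\|(x+y)/2\|=1$. Take $u=0$, $v=x+y$, and $z=x$. Then $\|z-u\|+\|v-z\|=\|x\|+\|y\|=2=\|x+y\|=\|v-u\|$, so $z\in[u,v]$. If $z\in L(u,v)$, then $x=t(x+y)$ for some $t\in[0,1]$. Taking norms gives $1=2t$, so $t=1/2$, and then $x=(x+y)/2$, i.e.\ $x=y$, a contradiction. Hence $[u,v]\neq L(u,v)$, which completes the proof.
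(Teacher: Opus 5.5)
Your proof is correct. The paper gives no argument of its own for this proposition. It simply defers to Propositions 3.6 and 4.1 of \cite{bankston2018semicontinuity}, so your write-up is a self-contained replacement for an external citation rather than a variant of an in-paper proof.

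Your route is the standard direct one:
\begin{itemize}
\item The inclusion of the line segment in $[u,v]$ holds in every normed space.
\item For the forward direction, strict convexity enters only through the equality case of the triangle inequality: $\|a+b\|=\|a\|+\|b\|$ with $a,b\neq 0$ forces $a/\|a\|=b/\|b\|$. You correctly derive this from the midpoint condition using convexity of $t\mapsto\|(1-t)p+tq\|$.
\item The reverse direction is witnessed explicitly by $u=0$, $v=x+y$, $z=x$.
\end{itemize}
Your version spares the reader from consulting the cited paper; the citation buys brevity.

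One sentence is worth adding: reconcile your working definition with the paper's. The paper says the closed unit ball is a strictly convex set, i.e.\ open chords between distinct points of the ball lie in the open ball. That definition gives your midpoint condition immediately. Conversely, take $p\neq q$ with $\|p\|,\|q\|\le 1$ and $t\in(0,1)$. If both are unit vectors, your convexity argument already gives $\|(1-t)p+tq\|<1$. If one has norm $<1$, then $\|(1-t)p+tq\|\le (1-t)\|p\|+t\|q\|<1$ directly. With that remark, both directions of your proof hold verbatim under the paper's definition.
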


\vspace{0.3 cm}

\begin{proof}
See Propositions 3.6 and 4.1 in \cite{bankston2018semicontinuity}.
\end{proof}

\noindent \\ As an immediate corollary, we get: \\

\begin{corollary}
A subset of a strictly convex space is $d$-convex if and only if it is linearly convex.
\end{corollary}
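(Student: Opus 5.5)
Since $V$ is strictly convex, Proposition~\ref{prop1.1} applies and gives, for every pair $u,v\in V$, the identity
$$[u,v]=\left\{(1-t)u+tv: t\in[0,1]\right\}.$$
So the metric segment joining $u$ and $v$ is exactly the line segment joining them. The plan is to substitute this identity into the two definitions and observe that they then say the same thing.

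Fix a subset $C\subseteq V$. By definition, $C$ is $d$-convex if and only if $[u,v]\subseteq C$ for all $u,v\in C$. Using the identity above, this holds if and only if $\{(1-t)u+tv: t\in[0,1]\}\subseteq C$ for all $u,v\in C$. That is precisely the statement that $C$ contains the line segment between any two of its points, which is the characterization of linear convexity recalled in the introduction. This gives both implications at once.

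I do not expect any real obstacle. All the substantive work is in Proposition~\ref{prop1.1}, which we may take as given. The only point to check is that the identity is applied to every pair of points of $C$, including the case $u=v$. There both sides reduce to $\{u\}$: the metric segment because $d(u,z)+d(z,u)=0$ forces $z=u$, and the line segment trivially. So the equivalence holds pair by pair, and therefore for $C$ as a whole.
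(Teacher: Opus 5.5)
Your proof is correct and is exactly the paper's argument: the paper states this as an immediate corollary of Proposition~\ref{prop1.1}, obtained by substituting the identity $[u,v]=\{(1-t)u+tv : t\in[0,1]\}$ into the definition of $d$-convexity. Your separate check of the case $u=v$ is harmless but unnecessary, since Proposition~\ref{prop1.1} already covers all pairs $u,v\in V$.
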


\vspace{0.3 cm}

\noindent Conversely, a beautiful illustration of the failure of Proposition~\ref{prop1.1} to hold for normed  spaces that are not strictly convex comes from considering $\ell^p$ norms on $\mathbb{R}^2$. \\

\begin{figure}[H]
\centering
\includegraphics[width=9 cm]{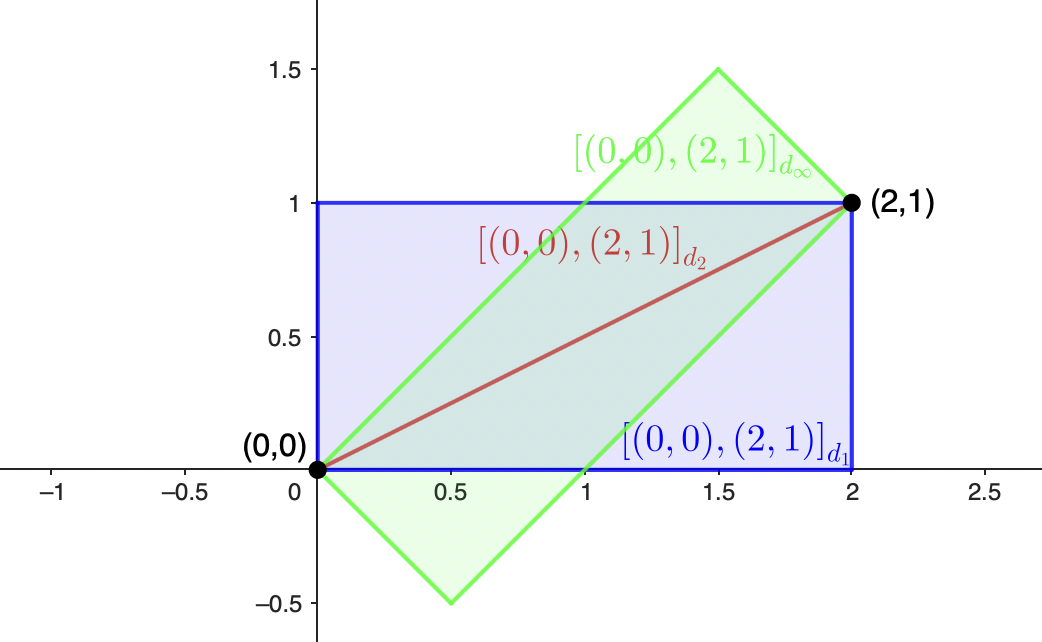} 
\caption{Metric segments in $\mathbb{R}^2$ endowed with the $\ell^1$, $\ell^2$, and $\ell^{\infty}$ norms.}
\label{fig1}
\end{figure}

\noindent \\In fact, $\ell^p$ norms on $\mathbb{R}^n$ are known to not be strictly convex precisely when $p \in \left\{ 1, \infty \right\}$. Since Menger, $d$-convexity has been studied by many authors (see e.g., \cite{hertel1994convexity}, \cite{soltan1984introduction}).\\

On the other hand, recently, we have witnessed a lot of work develop in the direction of attempting to formulate a unified framework under which the notions of convexity of sets in the various structures we have mentioned (real vector spaces, partially ordered sets, and metric spaces) can be studied. A prominent theory that has emerged from these efforts is that of \emph{convexity spaces}. Specifically, a convexity space is an ordered pair $(X, \mathcal{C})$ consisting of a set $X$ and a collection $\mathcal{C}\subseteq \mathcal{P}(X)$ of subsets of $X$ called a \textit{convexity} on $X$ satisfying the following properties:\\

\begin{enumerate}
\itemsep0.3em
\item[(C1)] $\emptyset, X \in \mathcal{C}$. \label{c1}
\item[(C2)] The intersection of an arbitrary collection of elements of $\mathcal{C}$ is an element of $\mathcal{C}$. \label{c2}
\item[(C3)] The union of an arbitrary chain under inclusion of elements of $\mathcal{C}$ is an element of $\mathcal{C}$. \label{c3} 
\end{enumerate} 

\noindent \\ An element of $\mathcal{C}$ is called a \textit{convex set}, and a set that is not convex is simply said to be  \textit{non-convex} (the term concave is discouraged for obvious reasons). The reader should note that the axiom \hyperref[c1]{(C1)} is redundant since it follows from the axioms \hyperref[c2]{(C2)} and \hyperref[c3]{(C3)} in the case when the collection of convex sets being considered is empty. Nevertheless, we include it in the definition of convexity spaces for the sake of clarity. This notion of convexity of sets is often referred to as \textit{axiomatic convexity}.\\

\noindent Convexity spaces indeed adequately generalize the properties of convex sets in real vector spaces, partially ordered sets, and metric spaces, as convex sets in all three structures are easily seen to satisfy axioms \hyperref[c1]{(C1)}-\hyperref[c3]{(C3)} and hence form a convexity. In particular, given a metric space $(X,d)$, we will denote the set of all convex subsets of $(X,d)$ by $\mathcal{C}_d$ and refer to it as the convexity \emph{induced} by $d$. Moreover, one can construct the category $\textbf{Conv}$ of convexity spaces if morphisms are chosen to be maps $f: (X, \mathcal{C}) \rightarrow (Y, D)$ satisfying $\forall C \in \mathcal{D} : f^{-1}(C) \in \mathcal{C}$. Then, we have the expected faithful functors $\textbf{Vect}_{\mathbb{R}} \rightarrow \textbf{Conv}$ and $\textbf{Pos} \rightarrow \textbf{Conv}$. On the other hand, if the category $\textbf{Met}$ of metric spaces is defined to have non-expansive or continuous maps as morphisms, then, unsurprisingly, we do not have an analogous faithful functor $\textbf{Met} \rightarrow \textbf{Conv}$. However, such a functor does exist when more natural morphisms are chosen for the category $\textbf{Met}.$\footnote{A more natural choice of morphisms for $\textbf{Met}$ making the category interact well with convexity is given by \emph{betweenness-preserving maps} i.e., maps $f:(X,d) \rightarrow (Y, \rho)$ satisfying $\forall x,y \in X: f([x,y]_d)\subseteq [f(x), f(y)]_{\rho}$ (terminology borrowed from \cite{hou2008betweenness}).}\\

\noindent Note that the axioms \hyperref[c1]{(C1)} and \hyperref[c2]{(C2)} ensure that every subset of a convexity space admits a unique smallest convex superset under inclusion, called its \emph{convex hull}. More formally, given a convexity space $(X, \mathcal{C})$ and a subset $S \subseteq X$, we can define its convex hull as the following set:\\

\vspace{-0.55 cm}

$$
\textup{conv}(S)=\bigcap \left \{ C : S \subseteq C \in \mathcal{C} \right \}.
$$

\vspace{0.2 cm}

\noindent Additionally, the comparison of convexities is established as follows. Given a pair of convexities $\mathcal{C}, \mathcal{D}$ on a set $X$, we say that $\mathcal{C}$ is \emph{coarser} than $\mathcal{D}$ i.e., that $\mathcal{D}$ is \emph{finer} than  $\mathcal{C}$ if $\mathcal{C} \subseteq  \mathcal{D}$. Axiomatic convexity was first introduced by Kay and Womble (see \cite{kay1971axiomatic}) in 1971 and has since been studied by many authors (see e.g., \cite{van1993theory}).

\vspace{0.5 cm}

This paper is organized as follows. Section~\ref{sec2} develops the basic theory of metric segments. We outline their fundamental geometric and topological properties, establish structural constraints on endpoints of metric segments and study the realizability of subsets as metric segments. Section~\ref{sec3} studies metrically, Menger, and strictly convex spaces. We examine the relationships between these spaces and give several equivalent characterizations of strict convexity. Finally, Section~\ref{sec4} compares $d$-convexity to other notions of convexity in metric spaces, including geodesic and $W$-convexity, and analyzes how and when the different notions of convexity agree and differ. The main contributions of this paper are:\\

\begin{itemize}
\itemsep0.3em
\item We show that metric segments realize arbitrary closed subsets after the metric is replaced by a topologically equivalent and bounded one (Theorem~\ref{thm2.10});
\item We characterize strict convexity in terms of an order structure on metric segments and their isometric embeddability into $\mathbb{R}$ (Theorem~\ref{thm3.3});
\item We characterize metric spaces with trivial metric segments via "local snowflaking" (Theorem~\ref{thm4.15}).
\end{itemize}

\vspace{0.3 cm}

\section{Metric Segments} \label{sec2}
Metric segments are the foundational and most basic concept in $d$-convexity. Thus, to effectively study $d$-convexity, it is of interest to first acquire a good understanding of the basic properties that metric segments satisfy. We begin this section by providing the reader with an additional visualization of metric segments to Figure~\ref{fig1}:

\vspace{0.2 cm}

\begin{example}
Consider the \emph{river metric} on $\mathbb{R}^2$:\\

$$d^*(p,q)=\begin{cases}
|y_1-y_2|, & x_1=x_2,\\[4pt]
|y_1|+|x_1-x_2|+|y_2|, & x_1\neq x_2.
\end{cases}$$\\

\noindent for points $p=(x_1,y_1), q=(x_2,y_2)$. One can easily verify that metric segments $[p,q]$ under $d^*$ are either vertical line segments (when $x_1=x_2$) or consist of a horizontal component on the $x$-axis and up to two vertical components (when $x_1 \neq x_2$), as illustrated by the following diagram:\\

\begin{center}
\includegraphics[width=10.5cm]{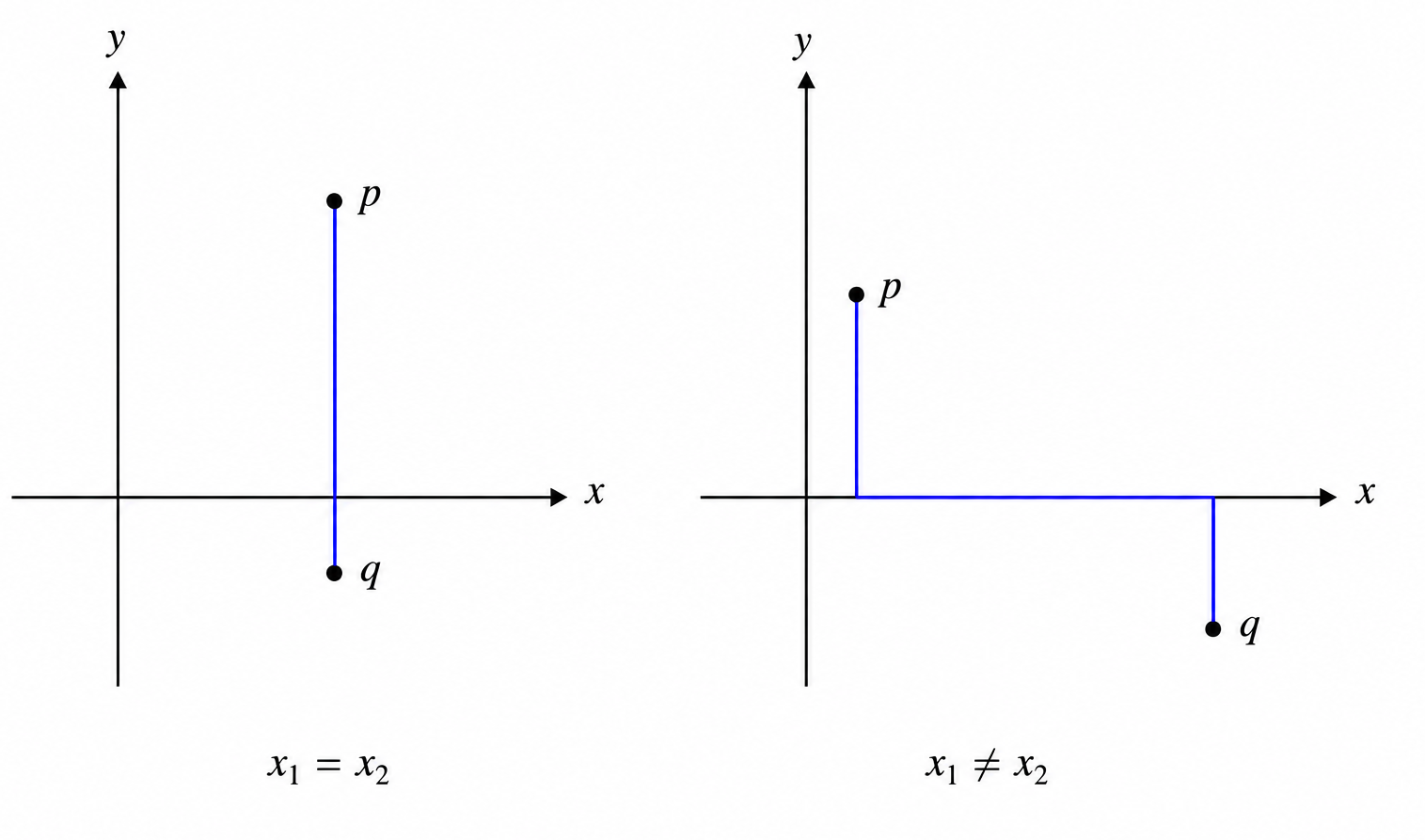}
\end{center}

\newpage

\captionof{figure}{Metric segments in $\mathbb{R}^2$ endowed with the river metric.}
\label{fig2}
\end{example}

\vspace{0.3 cm}

\noindent In \cite{okicic2023some}, Okičić and Rekić-Vuković study metric segments under the river metric in more depth. Furthermore, the following two propositions are well-known in literature and outline the most basic properties that general metric segments satisfy:\\

\begin{proposition}\label{prop2.2}
Let $(X,d)$ be a metric space and $x,y \in X$. Then, the following statements hold:\\

\begin{enumerate}
\itemsep0.3em
\item[\textup{($i$)}] $\left [ x,y \right ]$ is a closed set.
\item[\textup{($ii$)}] $\left [ x,y \right ]$ is a bounded set. Moreover, it holds that $\textup{diam}(\left [ x,y \right ])=d(x,y)$.
\end{enumerate}
\end{proposition}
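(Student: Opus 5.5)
For ($i$), I will write $[x,y]$ as the preimage of a closed set under a continuous map. Consider the function $f\colon X\to\mathbb{R}$ given by $f(z)=d(x,z)+d(z,y)$. Each of $z\mapsto d(x,z)$ and $z\mapsto d(z,y)$ is $1$-Lipschitz by the triangle inequality, so $f$ is continuous; in fact it is $2$-Lipschitz. Then
$$[x,y]=f^{-1}(\{d(x,y)\}),$$
and since $\{d(x,y)\}$ is closed in $\mathbb{R}$, the set $[x,y]$ is closed. One could instead argue sequentially: if $z_n\in[x,y]$ and $z_n\to z$, pass to the limit in $d(x,z_n)+d(z_n,y)=d(x,y)$ using continuity of the metric. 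The preimage formulation is cleaner, so I will use that.

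For ($ii$), I will prove $\textup{diam}([x,y])=d(x,y)$; boundedness then follows, since $d(x,y)<\infty$. The lower bound is immediate: $x,y\in[x,y]$, because $d(x,x)+d(x,y)=d(x,y)$ and symmetrically for $y$. Hence $\textup{diam}([x,y])\ge d(x,y)$.

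For the upper bound, fix $z,w\in[x,y]$. Applying the triangle inequality through $x$ and through $y$ gives $d(z,w)\le d(z,x)+d(x,w)$ and $d(z,w)\le d(z,y)+d(y,w)$. Adding these and using the defining identities $d(x,z)+d(z,y)=d(x,y)$ and $d(x,w)+d(w,y)=d(x,y)$ yields
$$2d(z,w)\le 2d(x,y).$$
Thus $d(z,w)\le d(x,y)$, and taking the supremum over $z,w$ gives $\textup{diam}([x,y])\le d(x,y)$.

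The only step requiring any idea is this averaging of the two triangle inequalities; everything else is routine.
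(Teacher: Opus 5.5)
Your proof is correct and follows the paper's proof: closedness comes from continuity of $z\mapsto d(x,z)+d(z,y)$ (the paper argues sequentially, which you mention as the alternative), and the diameter bound comes from adding the triangle inequalities through $x$ and through $y$. As in the paper, $x,y\in[x,y]$ gives the reverse inequality.
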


\vspace{0.3 cm}

\begin{proof}
$(i)$ We show that $[x,y]$ includes all of its limit points. Namely, let $p_n \rightarrow p$ be a convergent sequence of elements of $[x,y]$. By definition, $d(x,p_n)+d(p_n,y)=d(x,y)$ holds for all $n \in \mathbb{N}$. Taking $n \to \infty$ and using continuity of the metric, we get that\\

$$d(x,y)=\displaystyle \lim_{n\to \infty}\left ( d(x,p_n)+d(p_n,y) \right )=d(x,p)+d(p,y).$$\\

\noindent Thus, $p \in [x,y]$, and $[x,y]$ is closed.\\

\noindent $(ii)$ Let $p,q \in \left [ x,y \right ]$. By definition, we have that\\

$$d(x,p)+d(p,y)+d(x,q)+d(q,y)=2d(x,y).$$

\noindent \\ Furthermore, triangle inequality implies \\

$$d(x,p)+d(p,y)+d(x,q)+d(q,y) \geq 2d(p,q).$$\\

\noindent Hence, $2d(x,y) \geq 2d(p,q)$ i.e., $d(p,q) \leq d(x,y)$. This implies that $\textup{diam}([x,y]) \leq d(x,y)$. On the other hand, one can easily verify that $x,y \in [x,y]$. Hence, $\textup{diam}([x,y]) \geq d(x,y)$. This completes the proof.
\end{proof}

\vspace{0.3 cm}

\noindent Despite the fact that metric segments are closed and bounded, we cannot, in general, expect metric segments to be compact, as opposed to line segments in normed vector spaces. This can be seen from the following example:\\

\begin{example}\label{ex2.3}
Consider $\mathbb{Q}$ under the Euclidean metric. We have that $[0,1]=I \cap \mathbb{Q}$, where $I$ is the unit closed interval in $\mathbb{R}$. However, $I \cap \mathbb{Q}$ is not compact (one can, for instance, see this by sequential compactness).
\end{example}

\vspace{0.3 cm}

\begin{proposition}\label{prop2.4}
Let (X,d) be a metric space. Then, the following statements hold:\\

\begin{enumerate}
\itemsep0.3em
\item[$(i)$] For all $x,y \in X$, we have $x,y \in [x,y]$. \hfill (reflexivity)
\item[$(ii)$]  For all $x,y \in X$, we have $[x,y]=[y,x]$. \hfill (symmetry)
\item[$(iii)$]  For all $x,y, z \in X$, we have that $z \in [x,y]$ implies $[x,z],[z,y] \subseteq [x,y]$. \hfill (transitivity)
\item[$(iv)$] For all $x,y \in X$ and $z \in [x,y]$, we have $[x,z]\cap[z,y]=\left \{z \right \}$.
\end{enumerate}
\end{proposition}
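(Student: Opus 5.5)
The plan is to verify each of the four items directly from the definition $[x,y]=\{z: d(x,z)+d(z,y)=d(x,y)\}$, using only the metric axioms and the triangle inequality.

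For $(i)$, substitute $z=x$: then $d(x,x)+d(x,y)=0+d(x,y)$, so $x\in[x,y]$. The case $z=y$ is symmetric. For $(ii)$, symmetry of $d$ shows that the defining equation for $[x,y]$ and the one for $[y,x]$ are identical, so the two sets coincide.

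For $(iii)$, fix $z\in[x,y]$ and $w\in[x,z]$, so that
$$d(x,w)+d(w,z)=d(x,z)\quad\text{and}\quad d(x,z)+d(z,y)=d(x,y).$$
We must show $d(x,w)+d(w,y)=d(x,y)$.
\begin{itemize}
\item The inequality $\geq$ is just the triangle inequality.
\item For $\leq$, apply the triangle inequality $d(w,y)\leq d(w,z)+d(z,y)$. This gives
$$d(x,w)+d(w,y)\leq d(x,w)+d(w,z)+d(z,y)=d(x,z)+d(z,y)=d(x,y).$$
\end{itemize}
Hence $w\in[x,y]$, so $[x,z]\subseteq[x,y]$. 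The inclusion $[z,y]\subseteq[x,y]$ follows by the same argument, or by applying the first case to $[y,x]$ via $(ii)$.

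For $(iv)$, part $(i)$ gives $z$ in both segments, so it remains to show nothing else lies in the intersection. Take $w\in[x,z]\cap[z,y]$. Then $d(x,w)+d(w,z)=d(x,z)$ and $d(z,w)+d(w,y)=d(z,y)$. Adding these and using $z\in[x,y]$ gives
$$d(x,w)+d(w,y)+2d(w,z)=d(x,y).$$
Since $d(x,w)+d(w,y)\geq d(x,y)$ by the triangle inequality, this forces $2d(w,z)\leq 0$. So $d(w,z)=0$, i.e. $w=z$.

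None of these steps is a real obstacle. The only point requiring any care is choosing the right intermediate triangle inequality: passing through $z$ in $(iii)$, and adding the two defining equalities in $(iv)$.
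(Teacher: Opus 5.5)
Your proof is correct and follows essentially the same route as the paper: for $(iii)$ you use the same triangle inequality routed through $z$. For $(iv)$ you use the same idea of adding the two defining equalities; the paper invokes $(iii)$ to get $d(x,w)+d(w,y)=d(x,y)$, whereas you only need the triangle-inequality bound $\geq$, and you also explicitly note via $(i)$ that $z$ lies in the intersection, which the paper leaves implicit.
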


\vspace{0.5 cm}

\begin{proof}
The proofs of $(i)$ and $(ii)$ are left as an easy exercise for the reader.\\

\noindent $(iii)$ Let $p \in \left [ x, z \right ]$. The triangle inequality gives

\vspace{0.2 cm}

$$d(x,p)+d(p,y) \leq d(x,p)+d(p,z)+d(z,y).$$\\

\noindent Since $p \in \left [ x, z \right ]$, we know $d(x,p)+d(p,z)=d(x,z)$. Analogously, since $z \in [x,y]$, we get\\

$$d(x,p)+d(p,z)+d(z,y)=d(x,z)+d(z,y)=d(x,y)$$\\

\noindent Hence, $d(x,p)+d(p,y) \leq d(x,y)$. As we certainly have $d(x,p)+d(p,y) \geq d(x,y)$ by the triangle inequality, it follows that $d(x,p)+d(p,y)=d(x,y)$, which is equivalent to the statement $p \in [x,y]$. We conclude that $[x,z] \subseteq [x,y]$. By symmetry, we also conclude that $[z,y] \subseteq [x,y]$. \\

\noindent $(iv)$ Let $p \in [x,z] \cap [z,y]$. Then, by definition, the following relations hold\\

\begin{align*}
d(x,p)+d(p,z)&=d(x,z)\\
d(z,p)+d(p,y)&=d(z,y). 
\end{align*}

\noindent \\ Furthermore, by $(iii)$, we know that $p \in [x,y]$. Combining our observations, we get that\\

\begin{align*}
d(p,z)&=\frac{1}{2}\left ( d(x,z)-d(x,p)+d(z,y)-d(p,y) \right )\\
&=\frac{1}{2}(d(x,y)-d(x,y))=0.
\end{align*}

\noindent \\ Hence, $p=z$, and $[x,z] \cap [z,y]=\left\{ z\right\}$.
\end{proof}

\vspace{0.5 cm}

\noindent Note that every metric space $X$ induces a natural ternary relation $B$ on $X$ given by $\forall x,y,z \in X: B(x,z,y) \Leftrightarrow z \in [x,y]$, often called the \emph{betweenness relation} on $X$. In the language of this relation, Menger was the first to prove the property $(iii)$ in Proposition~\ref{prop2.4} (see \cite[Erste Untersuchung, §2]{menger1928untersuchungen}). Nowadays, betweenness in metric spaces continues to be an active area of research (see e.g., \cite{chvatal2004sylvester}). On the other hand, the property $(iv)$ in Proposition~\ref{prop2.4} does not seem to have a universally accepted terminology, but it appears extensively in literature on graph theory (see e.g., \cite{mulder1980interval}).\\

At first glance, the property $(ii)$ in Proposition~\ref{prop2.4} suggests that the endpoints of metric segments are unique up to permutation. More specifically, one might expect $[x,y]=[x',y'] \Rightarrow \left\{ x,y\right\}=\left\{ x',y'\right\}$ to hold for all points $x,y,x',y'$ in a metric space $X$. However, this is easily seen to be false by considering the $\ell^1$ and $\ell^{\infty}$ norms on $\mathbb{R}^2$ (see Figure~\ref{fig1}). For the sake of completeness, we provide an additional counterexample to this statement:\\

\begin{example}
Consider the unit circle $S^1 \subset \mathbb{R}^2$ endowed with the intrinsic metric. Then, we have that $[p,-p]=S^1$ for all $p \in S^1$.
\end{example}

\noindent \\ Keeping our remark in mind, it is important to note that endpoints defining congruent metric segments do exhibit a precise structural constraint. Namely, let us introduce the following definition:\\

\begin{definition}\label{def2.6}
Let $(X,d)$ be a metric space and let $x,y,x',y' \in X$. We say that  $\left\{ x, y\right\}$ and $ \left\{ x', y'\right\}$ are in a \emph{rectangular configuration} if the relations $d(x,y)=d(x',y')$, $d(x,y')=d(x',y)$, and $d(x,x')=d(y,y')$ hold.
\end{definition}

\vspace{0.3 cm}

\begin{figure}[H]
    \centering
   \includegraphics[width=6 cm]{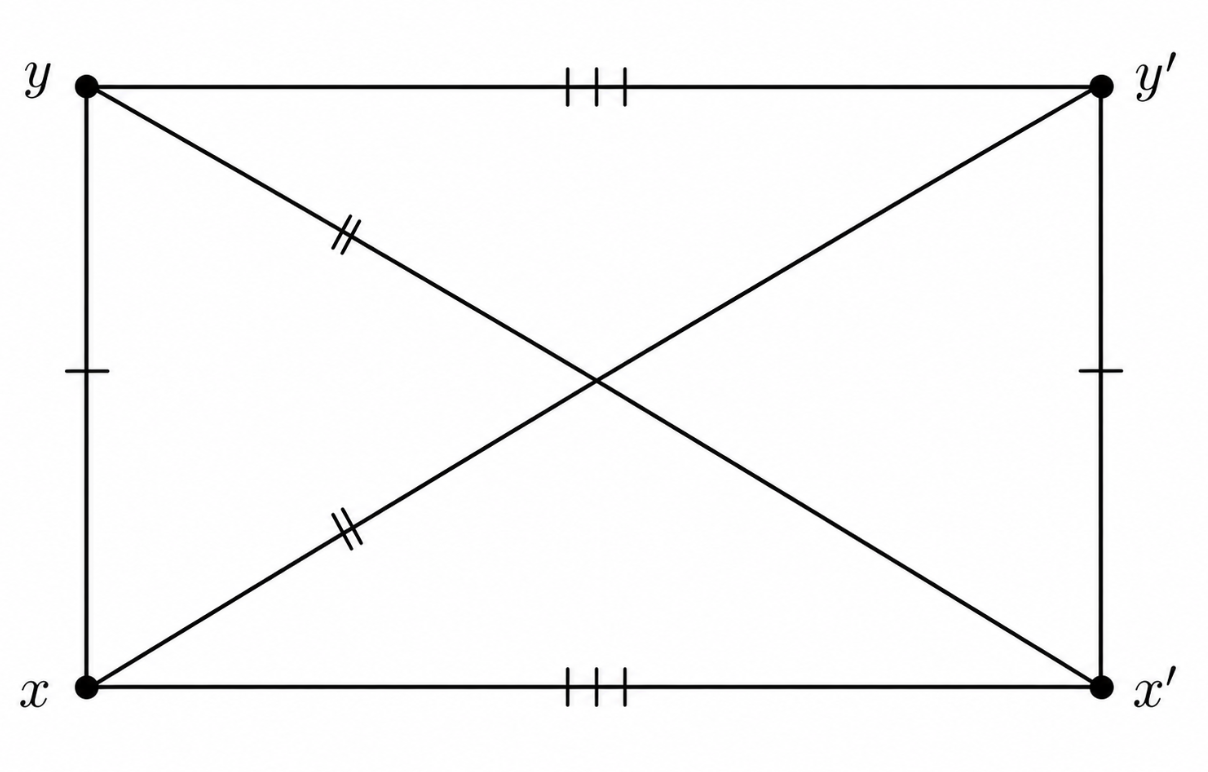} \caption{An illustration of the conditions of Definition~\ref{def2.6}.} \label{fig3}
    \end{figure}

\noindent \\ We have observed the following proposition:\\

\begin{proposition}\label{prop2.7}
Let $(X,d)$ be a metric space and assume $[x,y]=[x',y']$ for some $x,y,x',y' \in X$. Then, $ \left\{ x, y\right\}$ and $ \left\{ x', y'\right\}$ are in a rectangular configuration.
\end{proposition}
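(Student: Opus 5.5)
Write $D=d(x,y)$ and $D'=d(x',y')$. The plan has three steps: first get the equality of the two lengths, then the equality of the two ``cross'' distances, and finally the equality of the two ``side'' distances. Throughout we use only reflexivity (Proposition~\ref{prop2.4}~$(i)$) and the diameter formula (Proposition~\ref{prop2.2}~$(ii)$).

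\textbf{Step 1: $D=D'$.} By Proposition~\ref{prop2.2}~$(ii)$,
$$D=\textup{diam}([x,y])=\textup{diam}([x',y'])=D'.$$

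\textbf{Step 2: membership equations.} By Proposition~\ref{prop2.4}~$(i)$, the points $x',y'$ lie in $[x',y']=[x,y]$. Symmetrically, $x,y\in[x',y']$. This gives four equations:
\begin{align*}
d(x,x')+d(x',y)&=D, & d(x,y')+d(y',y)&=D,\\
d(x',x)+d(x,y')&=D, & d(x',y)+d(y,y')&=D.
\end{align*}
Here we have used $D'=D$.

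\textbf{Step 3: solve the system.} Set $a=d(x,x')$, $b=d(y,y')$, $c=d(x,y')$ and $e=d(x',y)$. The equations become
$$a+e=D,\qquad c+b=D,\qquad a+c=D,\qquad e+b=D.$$
From the first and third, $e=c$, which is the cross equality $d(x,y')=d(x',y)$. From the third and second, $a=b$, which is the side equality $d(x,x')=d(y,y')$. Together with $D=D'$, this is exactly the rectangular configuration of Definition~\ref{def2.6}.

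The only point needing any care is Step~1: without $D=D'$, the equations coming from $x,y\in[x',y']$ would involve $D'$ rather than $D$, and the linear system would not close. The diameter formula supplies this equality directly, so there is no real obstacle beyond that.
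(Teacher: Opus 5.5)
Your proof is correct and follows essentially the same route as the paper: equal diameters give $d(x,y)=d(x',y')$ via Proposition~\ref{prop2.2}, and subtracting pairs of membership equations from reflexivity then yields $d(x,y')=d(x',y)$ and $d(x,x')=d(y,y')$. The only cosmetic difference is that you write down all four membership equations, whereas only three are needed; your equation $e+b=D$ is never used.
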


\vspace{0.5 cm}

\begin{proof}
By assumption, it holds that $\textup{diam}([x,y])=\textup{diam}([x',y'])$, so Proposition~\ref{prop2.2} implies $d(x,y)=d(x',y')$. Furthermore, from the relations $x \in [x',y']$ and $x' \in [x,y]$, we get\\

\begin{align*}
d(x',x)+d(x,y')&=d(x',y')\\
d(x,x')+d(x',y)&=d(x,y).
\end{align*}

\noindent \\However, since we already proved that $d(x,y)=d(x',y')$, it follows that $d(x,y')=d(x',y)$. Analogously, from the relation $x,y \in [x',y']$, we get\\

\begin{align*}
d(x',x)+d(x,y')&=d(x',y')\\
d(x',y)+d(y,y')&=d(x,y).
\end{align*}

\noindent \\However, since we have already proved $d(x,y)=d(x',y')$ and $d(x,y')=d(x',y)$, it follows that $d(x,x')=d(y,y')$.
\end{proof}

\noindent \\ Informally speaking, Proposition~\ref{prop2.7} gives more context to the observation that metric segments in $\mathbb{R}^2$ endowed with the $\ell^1$ and $\ell^{\infty}$ metrics are rectangles (see Figure~\ref{fig1}).\\

\noindent Furthermore, the property $(iii)$ in Proposition~\ref{prop2.4} alludes to the possibility that metric segments are themselves convex sets. However, this is not true in general, as can easily be seen from the following example:\\

\begin{example}
Consider the connected graph $G$ represented by the diagram below:\\

\begin{center}
\includegraphics[width=6cm]{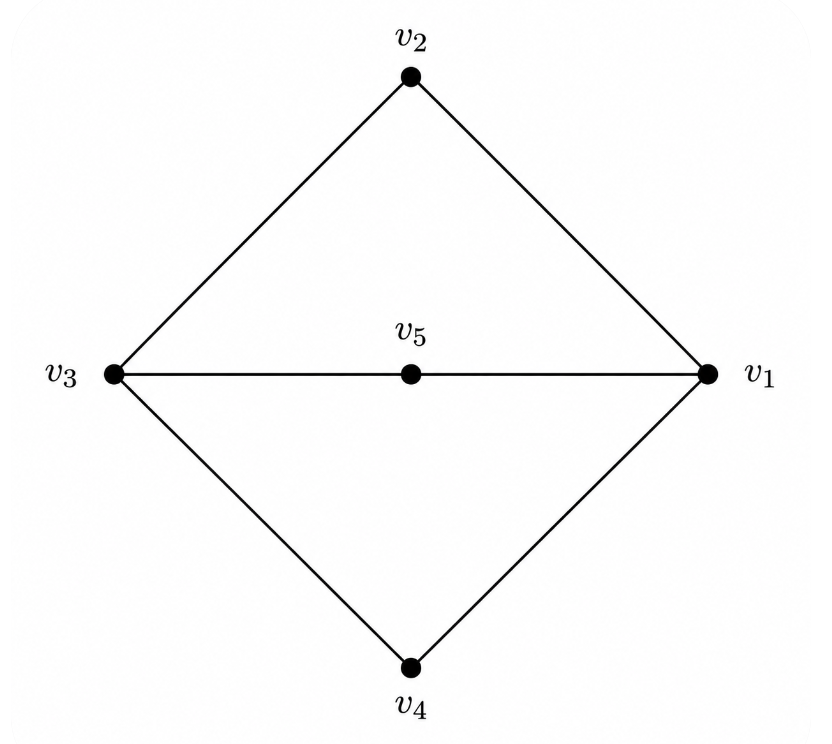}
\end{center}

\newpage

\captionof{figure}{A connected graph on 5 vertices as a counterexample to the convexity of metric segments.}
\label{fig4}

\noindent \\endowed with the usual geodesic metric. Observe that we have $[v_2, v_4]=G-\left\{ v_5 \right\}$. However, we also have $[v_1,v_3]=G$. Thus, $v_1,v_3 \in [v_2, v_4]$, but $[v_1, v_3] \nsubseteq [v_2,v_4]$. Hence, $[v_2, v_4]$ is non-convex.\footnote{In a precise sense, $G$ is the smallest connected graph that is a counterexample to the convexity of metric segments. Namely, all connected graphs with $\leq 4$ vertices have convex metric segments. Additionally, $G$ is the smallest connected graph on 5 vertices with a non-convex metric segment by number of edges.}
\end{example}

\vspace{0.3 cm}

Our discussion so far prompts us to inquire about whether it is possible to classify the different shapes metric segments can take on in metric spaces. We contribute to this problem by first noting the following proposition:\\

\begin{proposition}\label{prop2.9}
Let $X$ be a set and let $K \subseteq X$ be a subset of $X$ with $\left | K\right |>1$. Then, given any distinct $x,y$ in $K$, there exists a metric $d$ on X so that $[x,y]=K$.
\end{proposition}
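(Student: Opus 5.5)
We construct $d$ explicitly, with no topology to respect, choosing the values so that the defining equation of $[x,y]$ holds exactly on $K$. The natural choice is
$$d(p,q)=\begin{cases}0,& p=q,\\ 2,& \{p,q\}=\{x,y\},\\ 1,& p\neq q,\ \{p,q\}\neq\{x,y\},\ p,q\in K,\\ 2,& p\neq q \text{ otherwise (i.e. at least one of } p,q\notin K).\end{cases}$$
So $d(x,y)=2$, points of $K\setminus\{x,y\}$ are at distance $1$ from $x$ and $y$ and from one another, and any point outside $K$ is at distance $2$ from everything else. When $K=\{x,y\}$ the third case only involves pairs in $K$ other than $\{x,y\}$, so it is vacuous and all distinct points are at distance $2$.

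The first step is to verify that $d$ is a metric. Symmetry and positivity are immediate. The triangle inequality holds because every nonzero distance lies in $\{1,2\}$: for distinct $p,q$ and any $r\notin\{p,q\}$ we get $d(p,r)+d(r,q)\ge 1+1=2\ge d(p,q)$. If $r\in\{p,q\}$ the inequality is trivial. This is the standard fact that any symmetric function with values in $\{0\}\cup[1,2]$ and zero only on the diagonal is a metric.

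The second step is to compute $[x,y]$. The endpoints $x,y$ lie in $[x,y]$ by Proposition~\ref{prop2.4}(i). For $z\in K\setminus\{x,y\}$ we have $d(x,z)+d(z,y)=1+1=2=d(x,y)$, so $z\in[x,y]$. For $z\notin K$ we have $d(x,z)+d(z,y)=2+2=4\neq 2$, so $z\notin[x,y]$. Hence $[x,y]=K$.

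No step is a real obstacle. The only point needing care is the choice of constants: the values must make $K$-points exactly "midpoints" while staying in the interval $[1,2]$ that guarantees the triangle inequality. The cases are also arranged so that $K=\{x,y\}$ and $K=X$ are both handled without separate treatment.
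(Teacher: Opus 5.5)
Your proof is correct and follows the same route as the paper: you write down an explicit piecewise-constant metric in which the points of $K\setminus\{x,y\}$ are exact midpoints of $x$ and $y$, and the points outside $K$ are pushed far away. The only difference is the choice of constants. The paper puts distance $2$ between distinct points of $K\setminus\{x,y\}$ and distance $3$ on pairs involving a point outside $K$, which needs a short case check of the triangle inequality, whereas your nonzero values lie in $\{1,2\}$, so the triangle inequality is automatic.
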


\vspace{0.3 cm}

\begin{proof}
We define a function $d: X^2 \rightarrow \mathbb{R}_{\geq 0}$ so that $\forall p \in X: d(p,p)=0$ and for all distinct $p,q \in X$, we have\\

\begin{itemize}
     \item
     $d(p,q)=1$ if one of $p,q$ is $x$ or $y$, and the other is in $K-\left\{ x, y\right\}$;
     \item
     $d(p,q)=2$ if 
     $\left\{ p,q\right\}=\left\{ x,y \right\}$ or $p,q \in K-\left\{ x, y\right\}$;
     \item
     $d(p,q)=3$ if at least one of $p,q$ is not in $K$.
\end{itemize}

\noindent \\ Clearly, $d$ is positive-definite and symmetric. Furthermore, assume $p,q,r \in X$ are distinct points. We check the triangle inequality $d(p,r)+d(r,q) \geq d(p,q)$. The only non-trivial case to check is when $d(p,q)=3$ i.e., when at least one of $p,q$ is not in $K$. Without loss of generality, we can assume $p \notin K$. The only way for the triangle inequality to be violated is if we had $d(p,r)=d(r,q)=1$. However, $d(p,r)=1$ would imply $p \in K$, which is a contradiction. Thus, $d$ satisfies the triangle inequality and is hence a metric on $X$. Finally, one can easily verify that $[x,y]=K$.
\end{proof}

\vspace{0.3 cm}

\noindent Proposition~\ref{prop2.9} implies that metric segments can essentially take arbitrary shapes in metric spaces. In fact, we can prove a stronger result:\\

\begin{theorem}\label{thm2.10}
Let $(X,d)$ be a metric space and let $K \subseteq X$ be a closed subset of $X$ with $\left | K\right |>1$. Then, given any distinct $x,y$ in $K$, there exists a metric $\tilde{d}$ bounded and topologically equivalent to $d$ so that $[x,y]_{\tilde{d}}=K$.
\end{theorem}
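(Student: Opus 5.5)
The plan is to build $\tilde d$ as a sum of simple (pseudo)metrics, each continuous with respect to $d$. One term makes every point of $X$ lie "between" $x$ and $y$. A second term penalizes points off $K$, using that $K$ is closed.

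\textbf{Step 1 (bounded replacement and auxiliary functions).} Let $\rho=\min(d,1)$. This is a bounded metric topologically equivalent to $d$. Define
$$t(p)=\frac{\rho(x,p)}{\rho(x,p)+\rho(p,y)}, \qquad \varphi(p)=\rho(p,K)=\inf_{k\in K}\rho(p,k).$$
The denominator of $t$ is $\geq \rho(x,y)>0$ by the triangle inequality, so $t$ is well defined and $\rho$-continuous, with $t(x)=0$, $t(y)=1$ and $0\le t\le 1$. The function $\varphi$ is $1$-Lipschitz and vanishes on $K$. Since $K$ is closed, $\varphi(p)>0$ for every $p\notin K$.

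\textbf{Step 2 (definition and metric axioms).} With $A=2$, set
$$\tilde d(p,q)=\max\bigl(A|t(p)-t(q)|,\ \rho(p,q)\bigr)+|\varphi(p)-\varphi(q)|.$$
A pointwise maximum of a metric and a pseudometric is a metric. Adding the pseudometric $|\varphi(p)-\varphi(q)|$ keeps it a metric. Boundedness follows from $\tilde d\le A+1+1$.

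\textbf{Step 3 (topological equivalence).} Since $\tilde d\ge\rho$, $\tilde d$-convergence implies $\rho$-convergence. Conversely, if $\rho(p_n,p)\to0$, then $t(p_n)\to t(p)$ and $\varphi(p_n)\to\varphi(p)$ by continuity, so $\tilde d(p_n,p)\to 0$. Metric topologies are determined by convergent sequences, so $\tilde d$ is equivalent to $\rho$ and hence to $d$.

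\textbf{Step 4 (computing the segment).} This is the key step. We have $\tilde d(x,y)=\max(A,\rho(x,y))=A$, because $\rho\le1<2$. For any $z\in X$, using $\rho(x,z)+\rho(z,y)\le 2$ we get
$$A\,t(z)=\frac{2\rho(x,z)}{\rho(x,z)+\rho(z,y)}\ge\rho(x,z),$$
and symmetrically $A(1-t(z))\ge\rho(z,y)$. Hence the maxima are attained by the $t$-terms. Since $\varphi(x)=\varphi(y)=0$, this gives
$$\tilde d(x,z)+\tilde d(z,y)=A\,t(z)+A(1-t(z))+2\varphi(z)=A+2\varphi(z).$$
This equals $\tilde d(x,y)=A$ exactly when $\varphi(z)=0$, that is, when $z\in K$. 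Therefore $[x,y]_{\tilde d}=K$. The choice $A=2$ is exactly what forces the maxima onto the $t$-terms. This calibration, together with closedness of $K$ (so that $\varphi>0$ off $K$), is the only delicate point.
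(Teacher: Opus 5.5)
Your proof is correct and is essentially the paper's construction: truncate to $\min\{d,1\}$, use the ratio $\rho(x,p)/(\rho(x,p)+\rho(p,y))$ together with the penalty $\rho(p,K)$, and take a maximum with a multiple of the truncated metric so that the ratio terms dominate and account exactly for $\tilde d(x,y)$. The differences are minor. You define the ratio function on all of $X$ directly (its denominator is at least $\rho(x,y)>0$), which makes the paper's Tietze extension from $K$ unnecessary, and you add the penalty outside the maximum, which gives the exact identity $\tilde d(x,z)+\tilde d(z,y)=2+2\varphi(z)$ in place of the paper's lower bound $1+2g(p)$.
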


\vspace{0.3 cm}

\begin{proof}
Set $\hat{d}=\min\left\{ d, 1\right\}$ and on $K$, set\\

\begin{equation}\label{2.1}
f
(p)=\frac{\hat{d}(x,p)}{\hat{d}(x,p)+\hat{d}(p,y)}
\end{equation}\\

\noindent for all $p \in K$. Then, $f$ defines a continuous function $f: K \rightarrow [0,1]$. As $K$ is closed, by the Tietze extension theorem, there is a continuous extension $F: X \rightarrow [0,1]$ of $f$ to $X$.  Finally, we also set $g(p)=\hat{d}(p,K)$ for all $p \in X$. Then, $g$ defines a continuous function $g: X \rightarrow [0, 1]$. Now, we can define $\tilde{d}$ so that\\

$$\tilde{d}(p,q)=\max\left\{\frac{1}{2} \hat{d}(p,q), \left | F(p)-F(q)\right |+ \left | g(p)-g(q)\right | \right\}$$\\

\noindent for all $p,q \in X$. Clearly, $\tilde{d}$ is bounded. We proceed by proving the following claims:\\

\noindent \textbf{Claim 1:} $\tilde{d}$ is a metric topologically equivalent to $d$.\\

\noindent The fact that $\tilde{d}$ is a metric follows from the fact that $\hat{d}$ is a metric and the observation that the expression $\left | F(p)-F(q)\right |+\left | g(p)-g(q)\right |$ defines a pseudometric on $X$. Furthermore, as $\hat{d}$ is topologically equivalent to $d$, it suffices to establish the topological equivalence of $\tilde{d}$ and $\hat{d}$. Thus, let $p\in X$ and let $(p_n)$ be a sequence of points in $X$. Definitionally, we have the relation $\frac{1}{2}\hat{d}\leq \tilde{d}$. Hence, the convergence $p_n \rightarrow p$ in $(X, \tilde{d})$ implies the convergence $p_n \rightarrow p$ in $(X,\hat{d})$. Conversely, assume $p_n \rightarrow p$ in $(X,\hat{d})$. Then, $\hat{d}(p_n,p) \rightarrow 0$ by definition. Additionally, we have $\left | F(p_n)-F(p)\right |\rightarrow 0$ and $\left | g(p_n)-g(p)\right |\rightarrow 0$ by continuity of $F$ and $g$ in $(X, \hat{d})$, respectively. Hence, $\tilde{d}(p_n,p) \rightarrow 0$, and $p_n \rightarrow p$ in $(X, \tilde{d}$). Indeed, $\tilde{d}$ and $\hat{d}$ are topologically equivalent. \\

\noindent \textbf{Claim 2:} $[x,y]_{\tilde{d}}=K$.\\

\noindent It is very beneficial to observe the relations $F(x)=0$, $F(y)=1$, and $\forall p \in X: (g(p)=0 \Leftrightarrow p \in K)$ (the last relation holds as $K$ is also closed in $(X, \hat{d}$)). Now, we easily calculate $\tilde{d}(x,y)=1$ and obtain the expressions since\\
\begin{align}
\tilde{d}(x,p)
&=\max\left\{ \frac{1}{2}\hat{d}(x,p), F(p)+g(p)\right\}
\label{2.2}\\
\tilde{d}(p,y)
&=\max\left\{ \frac{1}{2}\hat{d}(p,y), 1-F(p)+g(p)\right\}
\label{2.3}
\end{align}

\noindent \\ for all $p \in X$. Hence, we always have that\\

$$\tilde{d}(x,p)+\tilde{d}(p,y)\geq 1+2g(p)$$\\

\noindent When $p \notin K$, this ensures that $p \notin [x,y]_{\tilde{d}}$, so $[x,y]_{\tilde{d}} \subseteq K$. Finally, we show that $K \subseteq [x,y]_{\tilde{d}}$. Namely, assume $p \in K$. Then, \hyperref[2.2]{(2.2)}  and \hyperref[2.3]{(2.3)} give\\

\begin{align*}
\tilde{d}(x,p)&=\max\left\{ \frac{1}{2}\hat{d}(x,p), F(p)\right\}\\
\tilde{d}(p,y)&=\max\left\{ \frac{1}{2}\hat{d}(p,y), 1-F(p)\right\}.
\end{align*}\\

\noindent Furthermore, the bound $\hat{d} \leq 1$ gives the relations

\begin{align*}
F(p)=f(p) &\geq \frac{\hat{d}(x,p)}{2} \\
1-F(p)=1-f(p) &\geq \frac{\hat{d}(p,y)}{2}
\end{align*}

\noindent \\ by \hyperref[2.1]{(2.1)}. Thus, it follows that we have $\tilde{d}(x,p)=F(p)$ and $\tilde{d}(p,y)=1-F(p)$. Hence, $p \in [x,y]_{\tilde{d}}$, so $K \subseteq [x,y]_{\tilde{d}}$. This finishes the proof.
\end{proof}

\vspace{0.3 cm}

\section{Metrically, Menger, and Strictly Convex Metric Spaces} \label{sec3}

In this section, we want to draw particular attention to three types of metric spaces crucial to the study of metric segments and $d$-convexity, namely, metrically, Menger, and strictly convex spaces. We first provide the definitions of these spaces below:\\

\begin{definition}\label{def3.1}
A metric space $(X,d)$ is said to be\\

\begin{enumerate}
\itemsep0.3em
\item[(a)] \emph{metrically convex} if $\left [ x,y \right ]-\left\{ x,y\right\} \neq \emptyset$ for all distinct $x,y \in X, x 
\neq y$;
\item[(b)]  \emph{Menger convex} if for all distinct $x,y \in X, x \neq y$ and $\alpha \in [0,1]$ there exists $z_{\alpha} \in X$ with $d(x,z_{\alpha})=\alpha d(x,y)$ and $d(z_{\alpha},y)=(1-\alpha) d(x,y)$;
\item[(c)]  \emph{strictly convex} if for all distinct $x,y \in X, x \neq y$ and $\alpha \in [0,1]$ there exists a \emph{unique} $z_{\alpha} \in X$ with $d(x,z_{\alpha})=\alpha d(x,y)$ and $d(z_{\alpha},y)=(1-\alpha) d(x,y)$.
\end{enumerate}
\end{definition}

\noindent \\Our terminology is borrowed from Bula \cite{bula1999strictly}. The notions in Definition~\ref{def3.1} are usually studied in the context of \emph{geodesics} in metric spaces i.e., paths that are also isometric embeddings (this will be discussed in more detail in Section~\ref{sec4}). However, in this section, we will take a different approach and study these spaces purely in the context of $d$-convexity. Clearly, one has the chain of implications $\textup{strictly convex} \Rightarrow \textup{Menger convex} \Rightarrow \textup{metrically convex}$. Moreover, for complete metric spaces, metric and Menger convexity actually coincide due to a theorem of Menger (see \cite[Erste Untersuchung, §5]{menger1928untersuchungen}). Certainly, for incomplete metric spaces, we cannot expect such an equivalence. For instance, the rationals $\mathbb{Q}$ under the Euclidean metric are metrically convex, but not Menger convex. Furthermore, it is worthwhile to note that the definition of a strictly convex metric space is not ambiguous from the perspective of the theory of normed spaces, as Proposition~\ref{prop1.1} implies that a normed vector space is strictly convex (in the usual sense) if and only if it is strictly convex in the sense of Definition~\ref{def3.1}. Thus, the notion of  strictly convex metric spaces is an adequate generalization of the strict convexity of normed vector spaces.\\

\noindent Arguably, the easiest way to ensure the Menger convexity of a metric space is to require  metric segments to be connected. Indeed, under this assumption on $(X,d)$, for all $x,y \in X, x \neq y$, we can construct a map $\phi_{xy}: [x,y] \rightarrow [0, d(x,y)]$ defined by $\phi_{xy}: z \in [x,y] \mapsto d(x,z)$, which is then surjective by the intermediate value theorem. Consequently, for all $\alpha \in [0,1]$, a choice of $z_\alpha \in \phi_{xy}^{-1}(\left\{ \alpha d(x,y)\right\})$ gives the associated point $z_\alpha$. Interestingly, the converse need not hold. In fact, a Menger convex metric space need not even be connected itself, as can be seen from the following example:\\

\begin{example}
Let $c_{00}$ be the space of real-valued sequences with finite support endowed with the $\ell^{\infty}$ norm. Also, define a continuous linear functional $\varphi$ on $c_{00}$ given by the rule $\varphi: a \mapsto \sum 2^{-n}a_n$. Then, consider $X=\varphi^{-1}(\mathbb{R}-\left\{ 0\right\})$ as a metric subspace of $c_{00}$. First, we claim that $X$ is Menger convex. Thus, let $x,y, x \neq y$ be a pair of distinct sequences in $X$ and let $\alpha \in [0,1]$. Additionally, set $u=(1-\alpha)x+\alpha y$. If $\varphi(u)\neq 0$, then $u \in X$, so $z_{\alpha}=u$ gives the associated point to $\alpha$. However, if $\varphi(u)=0$, choose $i \in \mathbb{N}$ outside of $\textup{supp}(x) \cup \textup{supp}(y)$ and $\varepsilon>0$ with

$$\varepsilon \leq \min\left\{ \alpha, 1-\alpha \right\} \left\| x-y\right\|_{\infty}.$$

\noindent \\ Setting $z_{\alpha}=u+\varepsilon e_i$, note that we have $\varphi(u+\varepsilon e_i)=2^{-i}\varepsilon$, so $z_\alpha \in X$. Finally, due to the bound of $\varepsilon$, we retain the relations\\

\begin{align*}
\left\| x-z_{\alpha}\right\|&=\alpha\left\| x-y\right\| \\
\left\| z_{\alpha}-y\right\|&=(1-\alpha)\left\| x-y\right\|,
\end{align*}\\

\noindent making $z_{\alpha}$ an associated point to $\alpha$. Hence, $X$ is Menger convex. However, $X$ is not connected, as the subsets $\varphi^{-1}(\mathbb{R}^{+})$ and $\varphi^{-1}(\mathbb{R}^{-})$ give a separation of $X$. 
\end{example}

\noindent \\ That being said, it is important to note that metric segments are indeed connected when $X$ is a complete Menger convex metric space as a consequence of another classical theorem of Menger (see \cite[Erste Untersuchung, §6]{menger1928untersuchungen}).\\

On the other hand, comparing the definitions of Menger convex and strictly convex metric spaces in Definition~\ref{def3.1}, we see that Menger convexity only implies the \emph{existence} of the points $z_{\alpha}$ for all $\alpha \in [0,1]$, while strict convexity also ensures \emph{uniqueness} of these points. For instance, $\mathbb{R}^2$ endowed with the $\ell^1$ and $\ell^{\infty}$ norms is a Menger convex, but not a strictly convex space. Hence, it is not surprising that it is usually harder to establish the strict convexity of metric spaces in comparison to Menger convexity. However, we have observed the following theorem, which may provide an easier method to establishing the strict convexity of metric spaces:\\

\begin{theorem}\label{thm3.3}
Let $(X,d)$ be a Menger convex metric space. Then, $X$ is strictly convex if and only if one of the following equivalent conditions is additionally satisfied:\\

\begin{enumerate}
\itemsep0.3em
\item[(i)] For all $x,y \in X$ and $p,q \in [x,y]$, we have that $p \in [x,q]$ whenever $d(x,p) \leq d(x,q)$.
\item[(ii)] For all $x, y \in X$ and $p,q \in [x,y]$, we have that $p \in [x,q]$ or $q \in [x,p]$.
\item[(iii)] For all $x, y \in X$, the set $\left\{ [x,p] : p \in [x,y] \right\}$ forms a chain under inclusion.
\item[(iv)] Every metric segment in $X$ is isometrically embeddable into $\mathbb{R}$.
\end{enumerate}
\end{theorem}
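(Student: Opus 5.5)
The plan is to make strict convexity the hub: prove strict convexity $\Leftrightarrow$ (i), and then prove (i) $\Leftrightarrow$ (ii) $\Leftrightarrow$ (iii) and (i) $\Rightarrow$ (iv) $\Rightarrow$ strict convexity. Throughout, the case $x=y$ is trivial, since then $[x,y]=\{x\}$. So assume $x\neq y$.

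\emph{Strict convexity $\Rightarrow$ (i).} This is the only step that really uses Menger convexity, and I expect it to be the main point. Let $p,q\in[x,y]$ with $d(x,p)\le d(x,q)$.
\begin{itemize}
\item If $q=x$, then $d(x,p)=0$, so $p=x\in[x,q]$.
\item If $q\neq x$, apply Menger convexity to the pair $x,q$ with $\beta=d(x,p)/d(x,q)\in[0,1]$. This gives $z\in[x,q]$ with $d(x,z)=d(x,p)$ and $d(z,q)=d(x,q)-d(x,p)$.
\end{itemize}
In the second case, Proposition~\ref{prop2.4}(iii) gives $[x,q]\subseteq[x,y]$, so $z\in[x,y]$. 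Hence
\[
d(z,y)=d(x,y)-d(x,z)=d(x,y)-d(x,p)=d(p,y).
\]
Set $\alpha=d(x,p)/d(x,y)$. Then $z$ and $p$ both satisfy the defining equations for $z_\alpha$ relative to $x,y$. By uniqueness in strict convexity, $z=p$, and therefore $p\in[x,q]$.

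\emph{(i) $\Rightarrow$ strict convexity.} Existence of $z_\alpha$ comes from Menger convexity, so only uniqueness is needed. Suppose $p,q$ both satisfy the equations for $z_\alpha$. Then $p,q\in[x,y]$ and $d(x,p)=d(x,q)$. By (i), $p\in[x,q]$, so
\[
d(p,q)=d(x,q)-d(x,p)=0,
\]
and hence $p=q$.

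\emph{(i) $\Leftrightarrow$ (ii) $\Leftrightarrow$ (iii).}
\begin{itemize}
\item (i) $\Rightarrow$ (ii): compare $d(x,p)$ with $d(x,q)$ and apply (i) in whichever direction holds.
\item (ii) $\Rightarrow$ (i): suppose $d(x,p)\le d(x,q)$ but the alternative $q\in[x,p]$ holds. Then $d(x,q)+d(q,p)=d(x,p)\le d(x,q)$, which forces $d(p,q)=0$. So $p=q\in[x,q]$ by Proposition~\ref{prop2.4}(i).
\item (ii) $\Leftrightarrow$ (iii): note that $[x,p]\subseteq[x,q]$ if and only if $p\in[x,q]$. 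The forward direction uses reflexivity, Proposition~\ref{prop2.4}(i); the backward direction uses transitivity, Proposition~\ref{prop2.4}(iii). With this, (ii) and (iii) say the same thing.
\end{itemize}

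\emph{(i) $\Rightarrow$ (iv).} Define $\phi:[x,y]\to\mathbb{R}$ by $\phi(p)=d(x,p)$. For $p,q\in[x,y]$, assume without loss of generality that $d(x,p)\le d(x,q)$. By (i), $p\in[x,q]$, so
\[
d(p,q)=d(x,q)-d(x,p)=|\phi(q)-\phi(p)|.
\]
Thus $\phi$ is an isometric embedding.

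\emph{(iv) $\Rightarrow$ strict convexity.} Again existence comes from Menger convexity. For uniqueness, let $\psi:[x,y]\to\mathbb{R}$ be an isometric embedding. Any candidate $z_\alpha$ lies in $[x,y]$, and $\psi(z_\alpha)$ has distances $\alpha d(x,y)$ and $(1-\alpha)d(x,y)$ to $\psi(x)$ and $\psi(y)$. These distances sum to $|\psi(x)-\psi(y)|$, so $\psi(z_\alpha)$ lies on the interval between $\psi(x)$ and $\psi(y)$ at a prescribed position. Hence $\psi(z_\alpha)$ is uniquely determined, and since $\psi$ is injective, so is $z_\alpha$.
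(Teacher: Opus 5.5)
Your proof is correct for the theorem as stated, and your argument for strict convexity $\Rightarrow$ (i) is essentially the paper's. The difference lies in how the remaining implications are organized, and it has a real consequence.

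The paper first proves the cycle (i) $\Rightarrow$ (ii) $\Rightarrow$ (iii) $\Rightarrow$ (iv) $\Rightarrow$ (i) in an \emph{arbitrary} metric space. It closes the cycle directly: if $f$ embeds $[x,y]$ isometrically into $\mathbb{R}$, then $f(p),f(q)$ lie between $f(x)$ and $f(y)$, so $|f(x)-f(p)|\le|f(x)-f(q)|$ forces $f(p)\in[f(x),f(q)]_{\mathbb{R}}$, i.e.\ $p\in[x,q]$. Menger convexity enters only afterwards, to make the injective map $\phi_{xy}$ surjective.

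In your scheme, the only path from (iv) back to (i)--(iii) is (iv) $\Rightarrow$ strict convexity $\Rightarrow$ (i). The first arrow needs the Menger hypothesis for the existence of $z_\alpha$, so you get the equivalence of (i)--(iv) only inside Menger convex spaces. That is enough for the statement, but the paper immediately relies on the hypothesis-free version:
\begin{itemize}
\item directedness is defined for arbitrary metric spaces via any of (i)--(iv);
\item Proposition~\ref{prop3.6} uses (ii) and (iv) together for an arbitrary directed space;
\item $\mathbb{Q}$ is called directed, although it satisfies (iv) without being strictly (or even Menger) convex.
\end{itemize}
Replacing your (iv) $\Rightarrow$ strict convexity step with the direct (iv) $\Rightarrow$ (i) argument above costs nothing and recovers this.

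Relatedly, your remark that strict convexity $\Rightarrow$ (i) is ``the only step that really uses Menger convexity'' is backwards. In that step the point $z\in[x,q]$ already comes from strict convexity. The steps (i) $\Rightarrow$ strict convexity and (iv) $\Rightarrow$ strict convexity are exactly where the Menger hypothesis is indispensable.

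Your direct (ii) $\Rightarrow$ (i) argument and the observation $[x,p]\subseteq[x,q]\Leftrightarrow p\in[x,q]$ are clean.
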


\vspace{0.3 cm}

\begin{proof}
We start by showing the following:\\

\noindent \textbf{Claim 1:} The conditions $(i)-(iv)$ are equivalent.\\

\noindent $(i) \Rightarrow (ii)$ Trivial.\\

\noindent $(ii) \Rightarrow (iii)$ Let $x,y \in X$ and $p,q \in [x,y]$. By $(ii)$, we have two cases. If $p \in [x,q]$, then Proposition~\ref{prop2.4} under $(iii)$ gives $[x,p] \subseteq [x,q]$. Similarly, if $q \in [x,p]$, we get $[x,q] \subseteq [x,p]$ by symmetry. Hence, $\left\{ [x,p] : p \in [x,y] \right\}$ is a chain under inclusion.\\

\noindent $(iii) \Rightarrow (iv)$ Let $x,y \in X$. We claim that the map $\phi_{xy}: [x,y] \rightarrow [0, d(x,y)]$ as defined previously is an isometric embedding. Thus, let $p,q \in [x,y]$. By $(iii)$, we can assume without loss of generality that $[x,q] \subseteq [x,p]$. Then, in particular, we have $q \in [x,p]$, so it follows that\\

\begin{align*}
&d(x,q)+d(q,p)=d(x,p)\\
\Leftrightarrow & d(x,p)-d(x,q)=d(p,q)\\
\Rightarrow & \left | \phi_{xy}(p)-\phi_{xy}(q)\right |=d(p,q).
\end{align*}

\noindent \\ Hence, $\phi_{xy}$ is an isometric embedding of $[x,y]$ into $\mathbb{R}$.\\

\noindent $(iv)\Rightarrow (i)$ Let $x,y \in X$, $p,q \in [x,y]$, and assume that $d(x,p) \leq d(x,q)$. Furthermore, choose an isometric embedding $f: [x,y] \hookrightarrow \mathbb{R}$ of $[x,y]$ into $\mathbb{R}$. Then, it is not difficult to observe that our assumptions translate into the statements $f(p),f(q) \in [f(x),f(y)]_{\mathbb{R}}$ and $\left | f(x)-f(p)\right | \leq \left | f(x)-f(q)\right | $ in $\mathbb{R}$. Now, it clearly follows that $f(p) \in [f(x), f(q)]_{\mathbb{R}}$, which is equivalent to the statement $p \in [x,q]$ in $X$.\\

Having established the equivalence of $(i)-(iv)$, we now, without loss of generality, assume that $(iii)$ holds. Then, as we have already proved, $\phi_{xy}$ is an isometric embedding for all $x,y \in X$. Hence, in particular, $\phi_{xy}$ is injective. However, as $\phi_{xy}$ is additionally surjective by the assumption that $X$ is Menger convex, we conclude that $\phi_{xy}$ is a bijection. Consequently, $X$ is strictly convex.\\

Conversely, assume that $X$ is strictly convex. We will, without loss of generality, conclude $(i)$. Thus, let $x,y \in X$. Without loss of generality, we can assume $x \neq y$. Furthermore, let $p,q \in [x,y]$ and assume $d(x,p) \leq d(x,q)$. Without loss of generality, we can additionally assume $p,q \notin \left\{ x,y\right\}$. Now, set

$$\alpha=\frac{d(x,p)}{d(x,q)}.$$

\noindent \\ Then, under our assumptions, we have that $\alpha \in (0,1]$. Hence, by strict convexity, there exists a unique $z_{\alpha} \in [x,q]$ satisfying $d(x,z_{\alpha})=\alpha d(x,q)$ and $d(z_{\alpha},q)=(1-\alpha)d(x,q)$. It suffices to prove that  $z_{\alpha}=p$. By Proposition~\ref{prop2.4} under $(iii)$, we have $[x,q] \subseteq [x,y]$, hence $z_{\alpha} \in [x,y]$. Additionally, we have by definition that\\

$$\frac{d(x,z_{\alpha})}{d(x,y)}=\frac{\alpha d(x,q)}{d(x,y)}=\frac{d(x,p)}{d(x,y)}.$$\\

\noindent However, as $p \in [x,y]$, the uniqueness of $z_{\alpha}$ necessitates that $p=z_{\alpha}$. This finishes the proof.
\end{proof}

\vspace{0.3 cm}

\noindent We note that  our Theorem~\ref{thm3.3} intersects with some  results that are already known in $d$-convexity. For instance, Menger observed the following:\\

\begin{theorem}[Menger, {\cite[Zweite Untersuchung, \S 10]{menger1928untersuchungen}}]\label{thm3.4}
Let $(X,d)$ be a metric space with $|X|>4$. Then, $X$ isometrically embeddable into $\mathbb{R}$ if and only if at least one of the statements $x \in [y,z]$, $y \in [x,z]$, and $z \in [x,y]$ is satisfied for all $x,y,z \in X$.
\end{theorem}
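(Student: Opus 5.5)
Necessity is the easy direction. If $f:X\to\mathbb{R}$ is an isometric embedding and $x,y,z\in X$, then $f(x),f(y),f(z)$ are three real numbers, so one of them lies between the other two. If, say, $f(y)$ lies between $f(x)$ and $f(z)$, then
\[
|f(x)-f(y)|+|f(y)-f(z)|=|f(x)-f(z)|,
\]
which says $y\in[x,z]$. The hypothesis $|X|>4$ is not needed here.

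Sufficiency carries the content, and I would prove it by building an explicit embedding.

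\emph{Reduction to a diameter pair.} Assume the betweenness condition holds for every triple. First suppose some pair $a,b\in X$ has $d(a,b)=\textup{diam}(X)$. Consider $\phi(p)=d(a,p)$; the goal is to show $\phi$ is an isometric embedding. For any $p$, consider the triple $a,b,p$: if $a\in[p,b]$ or $b\in[a,p]$, then $d(p,b)>d(a,b)$ or $d(a,p)>d(a,b)$ unless a degenerate equality occurs. So the maximality of $d(a,b)$ should force $p\in[a,b]$ up to degeneracies, giving $X=[a,b]$.

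\emph{Comparing two points of $[a,b]$.} For $p,q\in[a,b]$ with $d(a,p)\le d(a,q)$, I want $p\in[a,q]$, which yields $|\phi(p)-\phi(q)|=d(p,q)$ exactly as in the proof of $(iii)\Rightarrow(iv)$ in Theorem~\ref{thm3.3}. Apply the hypothesis to the triple $a,p,q$; there are three cases.
\begin{itemize}
\item $p\in[a,q]$: this is the desired conclusion.
\item $q\in[a,p]$: then $d(a,q)\le d(a,p)$, so the distances are equal and $p=q$.
\item $a\in[p,q]$: this is the bad case. Here $a$ lies between $p$ and $q$, both of which lie in $[a,b]$. Bring in the triple $p,q,b$ and the symmetric relation with $b$. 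Combining $d(p,q)=d(a,p)+d(a,q)$ with $d(p,b)=d(a,b)-d(a,p)$ and $d(q,b)=d(a,b)-d(a,q)$, each of the three betweenness options for $\{p,q,b\}$ should force $p=a$ or $q=a$, or else $d(a,p)+d(a,q)=0$.
\end{itemize}
Where these algebraic checks leave residual configurations, I would use the extra points guaranteed by $|X|>4$.

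\emph{The main obstacle.} The hard part is that a diameter pair need not exist, and the condition on triples alone fails for small spaces: a four-point space with all distances equal to~$1$ satisfies no betweenness at all, so it is not a counterexample, but there are genuine four-point counterexamples, such as a $4$-cycle with the graph metric. I therefore expect the real work to be the following. Show that every $5$-point subset satisfying the condition embeds isometrically into $\mathbb{R}$, by a finite case analysis on which betweenness relations hold. Then extend to all of $X$ by fixing two points $a\neq b$ and defining
\[
\phi(p)=\pm d(a,p),
\]
with the sign determined by whether $a$ lies between $p$ and $b$. Well-definedness and the isometry property are checked on subsets of at most five points: $a$, $b$, $p$, $q$ and one auxiliary point. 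The five-point case analysis, especially ruling out cyclic configurations in which $x\in[y,z]$, $y\in[z,w]$, $z\in[w,x]$ and so on, is the step I expect to be most delicate.
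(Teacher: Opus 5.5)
The paper only cites Menger for this theorem, so your argument has to stand on its own, and as written it omits the one step that uses $|X|>4$. The rest of your framework is fine: necessity, $X=[a,b]$ for a diameter pair $a,b$, the triple $\{a,p,q\}$, and the sign-extension $\phi(p)=\pm d(a,p)$. That extension in fact needs only that each set $\{a,b,p,q\}$ embeds.

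The problem is the case $a\in[p,q]$. There you claim every option for the triple $\{p,q,b\}$ forces degeneracy, and this is false. The option $b\in[p,q]$ gives $d(p,q)=2d(a,b)-d(a,p)-d(a,q)$. Together with $d(p,q)=d(a,p)+d(a,q)$ this yields only
\[
d(a,p)+d(a,q)=d(a,b)=d(p,q).
\]
Write $s=d(a,p)$ and $t=d(a,q)$. Then $d(a,p)=d(b,q)=s$, $d(a,q)=d(b,p)=t$ and $d(a,b)=d(p,q)=s+t$, so the four points form a metric $4$-cycle. In the paper's terms, $\{a,b\}$ and $\{p,q\}$ are in a rectangular configuration with $[a,b]=[p,q]$. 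All four triples satisfy the hypothesis, and for $s,t>0$ the quadruple does not embed in $\mathbb{R}$; your graph $4$-cycle is the case $s=t=1$.

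This rectangle is the \emph{only} obstruction, and excluding it is the whole content of the theorem. Saying you would ``use the extra points'' or do ``a delicate five-point case analysis'' therefore leaves the proof incomplete. Your ``main obstacle'' paragraph also misplaces the difficulty. A finite subset always has a diameter pair, so your first argument, once finished, already is the five-point analysis. What must be ruled out is this specific rectangle, not general cyclic patterns.

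The missing step is short. Let $Y\subseteq X$ be finite with $|Y|\ge 5$, with diameter pair $a,b$ and $D=d(a,b)$. Your case analysis actually proves a dichotomy: for $x,y\in Y$, either $d(x,y)=|d(a,x)-d(a,y)|$ or $d(x,y)=d(a,x)+d(a,y)=D$.

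Suppose $a,p,b,q$ form the rectangle with $s,t>0$. Pick $r\in Y\setminus\{a,b,p,q\}$ and set $u=d(a,r)\in[0,D]$. Since $d(p,q)=D$ is also a diameter, your maximality argument applied to $\{p,q,r\}$ gives $d(p,r)+d(r,q)=D$. Now split into cases:
\begin{itemize}
\item If $d(p,r)=D$, then $d(r,q)=0$, so $r=q$.
\item If $d(q,r)=D$, then $r=p$.
\item Otherwise the dichotomy gives $|s-u|+|t-u|=s+t$. Since $s,t>0$, this forces $u\in\{0,D\}$, i.e.\ $r\in\{a,b\}$.
\end{itemize}
Each case contradicts the choice of $r$, so the rectangle cannot occur and $d(a,\cdot)$ embeds $Y$. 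Every subset of $X$ with at most four points lies in a five-point subset, so it embeds too. Your sign-extension then completes the proof without needing a diameter pair in $X$ itself.
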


\noindent \\ The  bi-implication $(ii) \Leftrightarrow (iv)$, as established in our Theorem \ref{thm3.3}, closely resembles the statement of Theorem~
\ref{thm3.4} in the case when  metric segments are considered as separate metric subspaces. However, the proof is not immediate. One can indeed show $(ii) \Leftrightarrow (iv)$ using Theorem~\ref{thm3.4}, but we find this approach to be much more tedious than ours and, consequently, omit it. Additionally, we propose the following terminology:\\

\begin{definition}
A metric space is said to be \emph{directed} if it satisfies any of the equivalent conditions $(i)-(iv)$ of Theorem~\ref{thm3.3}.
\end{definition}

\vspace{0.3em}

\begin{figure}[H]
    \centering   \includegraphics[width=9 cm]{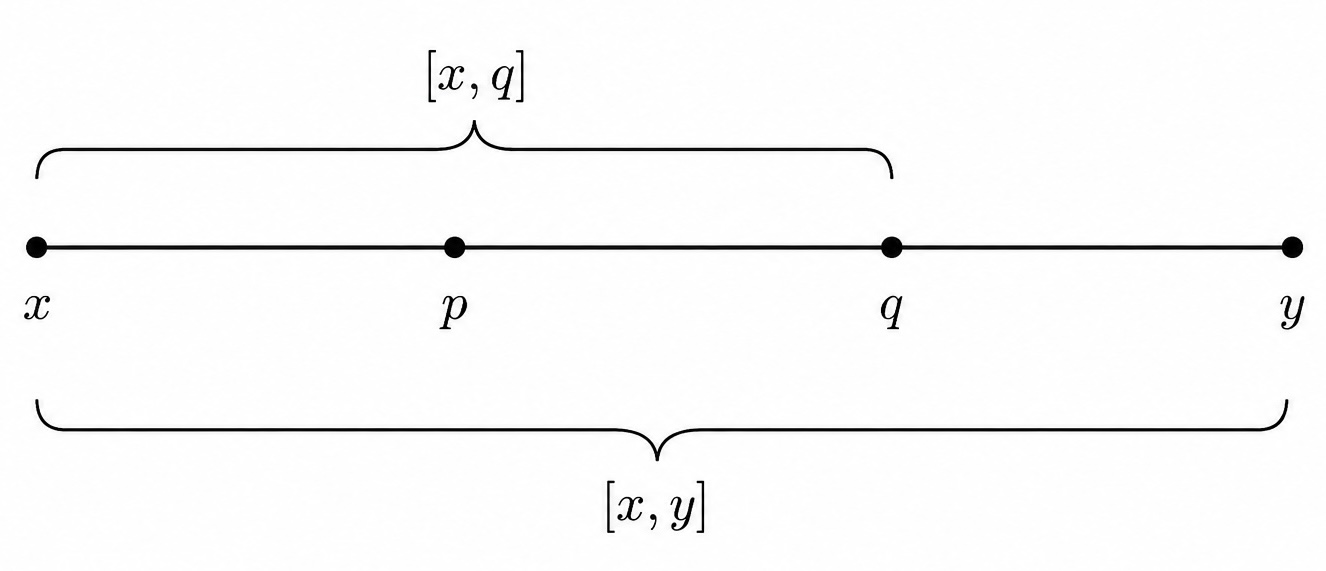}
   \caption{An illustration of condition $(i)$ of Theorem~\ref{thm3.3} as a criterion of directedness.} \label{fig5}
    \end{figure}

\vspace{0.2 cm}

\noindent Definitionally, directed spaces admit metric segments that are isometrically embeddable into $\mathbb{R}$. Furthermore, in $\mathbb{R}$, line segments are convex and have unique endpoints. We note that these properties transfer to general directed spaces, as illustrated by the following proposition:

\vspace{0.2 cm}

\begin{proposition}\label{prop3.6}
In a directed space, metric segments are convex and have unique endpoints.
\end{proposition}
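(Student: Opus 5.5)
We plan to prove the two claims separately. Convexity needs only condition $(i)$ of Theorem~\ref{thm3.3} plus the transitivity property of Proposition~\ref{prop2.4}$(iii)$. Uniqueness of endpoints uses condition $(iv)$, the isometric embeddability into $\mathbb{R}$. Throughout, ``directed'' means one of $(i)$–$(iv)$ holds; we do not assume Menger convexity.

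\emph{Convexity.} Fix $x,y \in X$ and $p,q \in [x,y]$; we must show $[p,q] \subseteq [x,y]$. By symmetry of $[p,q]$ we may assume $d(x,p) \leq d(x,q)$, so condition $(i)$ gives $p \in [x,q]$, i.e.\ $d(x,p)+d(p,q)=d(x,q)$. Take $r \in [p,q]$. Using the triangle inequality $d(x,r) \leq d(x,p)+d(p,r)$ we get
$$d(x,r)+d(r,q) \leq d(x,p)+d(p,r)+d(r,q) = d(x,p)+d(p,q) = d(x,q).$$
The reverse inequality is the triangle inequality, so $r \in [x,q]$. Since $q \in [x,y]$, Proposition~\ref{prop2.4}$(iii)$ gives $[x,q] \subseteq [x,y]$, hence $r \in [x,y]$. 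This step is routine; the only point to get right is choosing which of $p,q$ is nearer to $x$, so that the collinearity supplied by $(i)$ points the correct way.

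\emph{Unique endpoints.} Suppose $[x,y]=[x',y']$. If $x=y$, then $[x,y]=\{x\}$, so $x'=y'=x$. Assume now $x \neq y$ and set $D=d(x,y)$. By Proposition~\ref{prop2.7} (or directly from Proposition~\ref{prop2.2}$(ii)$), $d(x',y')=D$. By condition $(iv)$ there is an isometric embedding $f:[x,y] \hookrightarrow \mathbb{R}$. After composing with a translation and possibly a reflection of $\mathbb{R}$, we may assume $f(x)=0$ and $f(y)=D$.

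For any $p \in [x,y]$ we have $|f(p)|+|f(p)-D| = d(x,p)+d(p,y) = D$, which forces $f(p) \in [0,D]$. Apply this to $x',y' \in [x',y']=[x,y]$ (Proposition~\ref{prop2.4}$(i)$): both $f(x')$ and $f(y')$ lie in $[0,D]$ and satisfy $|f(x')-f(y')|=D$. The only way two points of $[0,D]$ can be at distance $D$ is $\{f(x'),f(y')\}=\{0,D\}=\{f(x),f(y)\}$. Since $f$ is injective, $\{x',y'\}=\{x,y\}$.

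I expect no real obstacle. The one step needing care is the normalization of the embedding and the verification that it maps $[x,y]$ onto a subset of $[0,D]$; this is exactly what turns the diameter equality into coincidence of endpoint images.
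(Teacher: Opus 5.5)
Your proof is correct and follows essentially the same route as the paper: convexity comes from $p\in[x,q]$ (which you obtain from condition $(i)$ rather than $(ii)$) combined with transitivity, Proposition~\ref{prop2.4}$(iii)$, and uniqueness of endpoints comes from an isometric embedding $f$ of $[x,y]$ into $\mathbb{R}$ under which both endpoint pairs realize the distance $d(x,y)$. Your normalization $f(x)=0$, $f(y)=D$ with $f([x,y])\subseteq[0,D]$ just spells out the paper's remark that a pair realizing the diameter of a subset of $\mathbb{R}$ is unique.
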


\vspace{0.1 cm}

\begin{proof}
Assume $(X,d)$ is directed and let $x,y \in X$. Additionally, choose $p,q \in [x,y]$. By directedness, we know that $p \in [x,q]$ or $q \in [x,p]$. Without loss of generality, we can assume that $p \in [x,q]$. Applying Proposition~\ref{prop2.4}
under $(iii)$ gives the sequence of inclusions $[p,q] \subseteq [x,q] \subseteq [x,y]$. Hence, $[p,q] \subseteq [x,y]$, and $[x,y]$ is convex.\\

\noindent Furthermore, to establish uniqueness of endpoints, let $x',y' \in X$ with $[x',y']=[x,y]=S$. As $X$ is directed, there is an isometric embedding $f: S \rightarrow \mathbb{R}$. Clearly, we have that $\textup{diam}(f(S))=\textup{diam}(S)$. Additionally,  Proposition~\ref{prop2.2} under $(ii)$ implies\\

$$\textup{diam}(S)=d(x,y)=d(x',y')$$

\noindent\\ and the assumption that $f$ is an isometry gives\\

\begin{align*}
\left | f(x)-f(y)\right |&=d(x,y)\\
\left | f(x')-f(y')\right |&=d(x',y').\\
\end{align*}

\vspace{0.3 cm}

\noindent Consequently, $\left\{ f(x),f(y)\right\}$ and $\left\{ f(x'),f(y')\right\}$ are pairs realizing the diameter of $f(S)$. However, as $f(S)\subset \mathbb{R}$ and diameter realizing pairs in $\mathbb{R}$ are unique, this necessitates $\left\{ f(x),f(y)\right\}=\left\{ f(x'),f(y')\right\}$. Finally, since $f$ is an injection, we conclude that $\left\{ x,y\right\}=\left\{ x',y'\right\}$.
\end{proof}

\noindent \\ Theorem~\ref{thm3.3} and Proposition~\ref{prop3.6} together imply the following expected corollary:\\

\begin{corollary}
In a strictly convex space, metric segments are convex and have unique endpoints.
\end{corollary}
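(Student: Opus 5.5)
The corollary is a direct composition of two earlier results. The plan is to show that every strictly convex space is directed, and then invoke Proposition~\ref{prop3.6}.

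Let $(X,d)$ be strictly convex. By Definition~\ref{def3.1} it is in particular Menger convex, since the existence part of (c) is exactly (b). So $X$ satisfies the hypothesis of Theorem~\ref{thm3.3}. The forward direction of that theorem then says that a strictly convex Menger convex space satisfies the equivalent conditions $(i)$--$(iv)$. In the terminology just introduced, $X$ is directed.

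Proposition~\ref{prop3.6} now applies to $X$. It gives both conclusions at once: every metric segment $[x,y]$ is $d$-convex, and $[x,y]=[x',y']$ forces $\{x,y\}=\{x',y'\}$.

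There is essentially no obstacle here. The only point to state explicitly is the implication strictly convex $\Rightarrow$ Menger convex, which is needed to enter Theorem~\ref{thm3.3}. Everything else is a direct citation of Theorem~\ref{thm3.3} and Proposition~\ref{prop3.6}.
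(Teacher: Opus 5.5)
Your proposal is correct and matches the paper's argument exactly: a strictly convex space is Menger convex, so Theorem~\ref{thm3.3} makes it directed, and Proposition~\ref{prop3.6} then gives both conclusions. Your explicit remark that strict convexity implies Menger convexity, which is needed to apply Theorem~\ref{thm3.3}, is the one step the paper leaves implicit.
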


\vspace{0.2 cm}

\noindent That being said, unsurprisingly, the topological properties of line segments in $\mathbb{R}$ such as compactness and connectedness do not transfer to general directed spaces. For instance, the rationals $\mathbb{Q}$ under the Euclidean metric are directed, but nontrivial metric segments  in $\mathbb{Q}$ are not compact (see Example~\ref{ex2.3}) and are totally disconnected.

\vspace{0.3 cm}

\section{Convex Sets in Metric Spaces} \label{sec4}
As announced, we will begin this section by outlining the most common notions of convexity of sets in metric spaces other than $d$-convexity and describing their relationships to $d$-convexity. We have organized these notions into four categories - linear, geodesic, $W$-convexity, and a combined metric and Menger convexity category. In the subsequent subsections, $(X,d)$ will denote an arbitrary metric space. Additionally, the reader should keep in mind the following remark, the proof of which should be apparent:\\

\begin{remark}\label{rem4.1}
Let $I: X^2 \rightarrow \mathcal{P}(X)$ be an arbitrary map. Then, the collection

\vspace{0.2 cm}

$$\mathcal{C}_I=\left\{ C \subseteq X \mid \forall x,y \in C : I(x,y) \subseteq C\right\}$$

\noindent \\ forms a convexity on $X$. We say that the convexity $\mathcal{C}_I$ is \emph{induced} by $I$.
\end{remark}

\noindent \\ For instance, setting $I(x,y)=[x,y]$ for all $x,y \in X$, one obtains the usual convexity $\mathcal{C}_d$ induced by the metric $d$.

\medskip

\subsection{Linear Convexity}\label{sub4.1}

\leavevmode

\noindent Oftentimes, it is the case that $X$ is a real vector space and $d$ is compatible with the linear structure on $X$ (i.e., $d$ is translation invariant and absolutely homogeneous). This is so, e.g., when $X$ is a normed space, and $d$ the induced metric. In such scenarios, it is sensible to consider the convexity of  linearly convex subsets of $X$, which we will denote by $\mathcal{C}^L_X$. When $X$ is strictly convex, one easily sees that metric segments agree with line segments 
in $X$. Consequently, linear and $d$-convexity agree on $X$ i.e., we have the relation $\mathcal{C}^L_X=\mathcal{C}_d$.  This agreement is a generalization of the case when $X$ is a normed space that was discussed in Section~\ref{sec1}. Furthermore, if $X$ is not necessarily strictly convex, one still observes the inclusion

\vspace{0.2 cm}

$$\left\{ (1-t)x+ty: t \in \left [ 0,1 \right ]\right\} \subseteq [x,y]$$

\noindent \\ for all $x,y \in X$. Consequently, $d$-convexity implies linear convexity in $X$ i.e., $\mathcal{C}^L_X$ is finer than $\mathcal{C}_d$. To see this on a particular example, consider the segment $S=[(0,0),(1,1)]_{d_2}$ under the Euclidean norm on $\mathbb{R}^2$. Then, $S$ is certainly linearly convex, but it is not convex under the $\ell^1$ norm on $\mathbb{R}^2$.

\medskip

\subsection{Geodesic Convexity}\label{sub4.2}
\leavevmode

\noindent Geodesic convexity provides a geometric notion of convexity of sets in metric spaces, relying on geodesics to define convex sets, as opposed to metric segments. We start this subsection by first formally defining the notion of a geodesic in a metric space:

\vspace{0.2 cm}

\begin{definition}
Given $x,y \in X$, a \emph{geodesic} joining $x,y$ is a path $\gamma: [0, d(x,y)] \rightarrow X$ from $x$ to $y$ that is also an isometric embedding.
\end{definition}

\vspace{0.2 cm}

\noindent (A useful intuition for the reader is that geodesics are paths that minimize length). We will assume that $X$ is \emph{geodesic}, so that any pair of points in $X$ can be joined by a geodesic. Examples of geodesic metric spaces include complete metrically convex spaces, as a consequence of the aforementioned theorem of Menger (see \cite[Erste Untersuchung, §6]{menger1928untersuchungen}), and connected graphs. Of course, geodesics between a pair of points need not be unique (the reader can convince themselves in this, e.g., by considering the geodesics of connected graphs). Thus, authors typically distinguish between the following two notions of geodesic convexity:\\

\begin{definition}
A subset $C \subseteq X$ is said to be

\vspace{0.2 cm}

\begin{enumerate}
\itemsep0.3em
\item[(a)] \emph{weakly convex} if every pair of points in $C$ can be joined by a geodesic whose image is contained in $C$ (or, equivalently, if $C$ is geodesic as a subspace of $X$);
\item[(b)] \emph{totally convex} if for every pair $x,y \in C$ of points in $C$, the image of \emph{all} geodesics joining $x,y$ is contained in $C$.
\end{enumerate}
\end{definition}

\vspace{0.2 cm}

\noindent (see e.g., \cite[§2.D]{alexander2024alexandrov} for a reference). Immediately, one can observe an apparent problem with weak convexity. Specifically, as we have discussed in Section~\ref{sec1}, notions of convexity of sets in numerous structures are aptly modeled under axiomatic convexity. However, while the collection of weakly convex subsets of $X$ satisfies the axioms \hyperref[c1]{C1} and \hyperref[c3]{C3}, it need not satisfy the axiom \hyperref[c2]{C2} (closure under arbitrary intersection). Consequently, weakly convex subsets of $X$ need not form a convexity, making weak convexity a poor notion of convexity of sets. 
An instance of the failure of the axiom \hyperref[c2]{C2} for weakly convex sets is given by the following example:\\

\begin{example}\label{ex4.4}
Consider the unit circle $S^1 \subset \mathbb{R}^2$ endowed with the intrinsic metric, which is clearly geodesic, and choose a point $p \in S^1$. Additionally, take $C_1$ and $C_2$ to be the two semicircular arcs bounded by the points $p,-p$ as illustrated by the following diagram:

\vspace{0.25cm}

\begin{center}
\includegraphics[width=5.5cm]{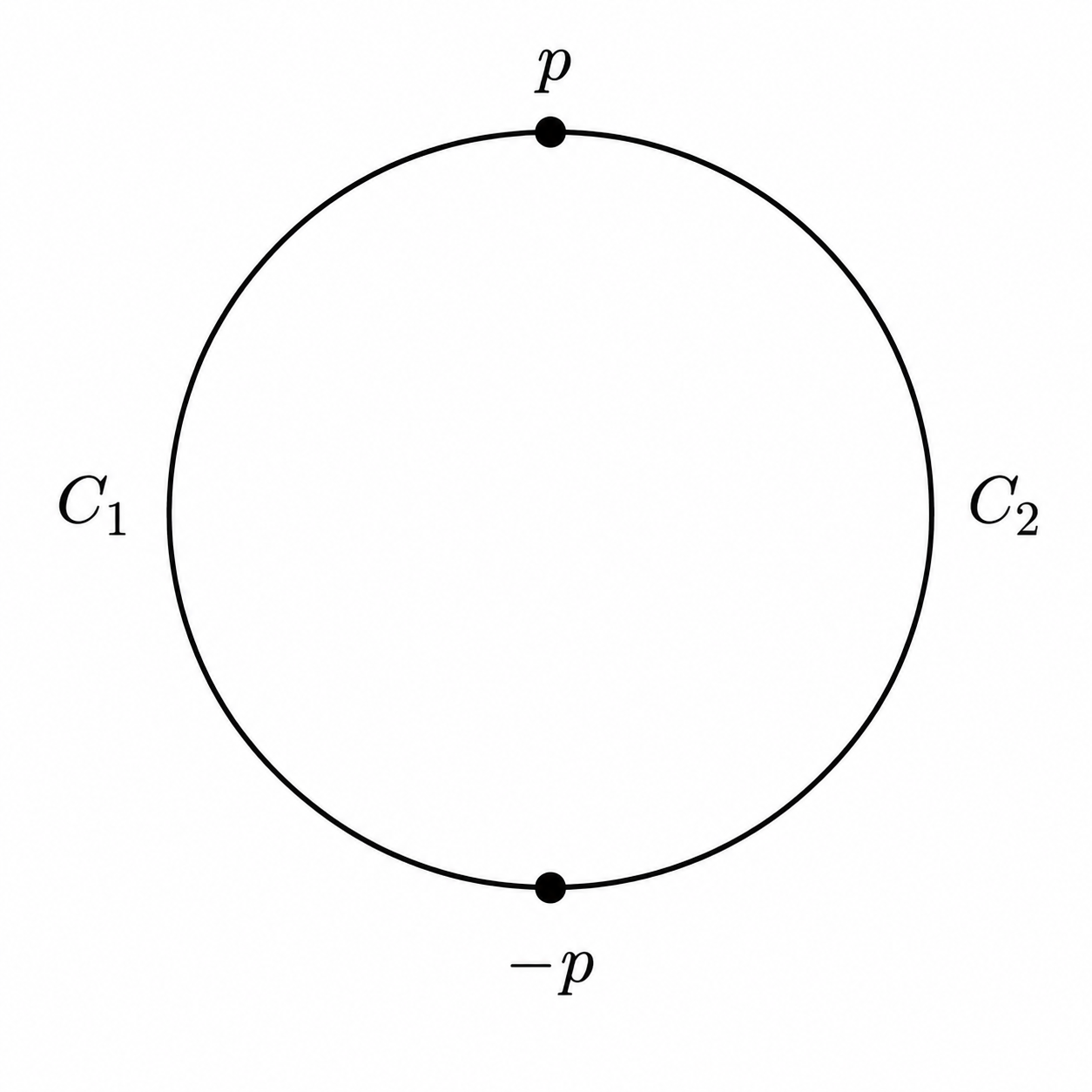}
\end{center}

\newpage

\captionof{figure}{Illustration of the failure of weakly convex subsets of the circle to be closed under intersection.}
\label{fig6}

\noindent \\ Then, $C_1$ and $C_2$ are weakly convex. However, $C_1 \cap C_2=\left\{ \pm p\right\}$, so $C_1 \cap C_2$ is not weakly convex.
\end{example}

\vspace{0.2 cm}

\noindent On the other hand, total convexity is much more well-behaved, as the collection of totally convex subsets of $X$ does form a convexity, which we will denote by $\mathcal{C}^G_d$. One can see this by setting

$$I(x,y)=\bigcup \left\{ \textup{im}(\gamma) : \gamma \ \textup{is a geodesic joining} \ x, y \right\}$$

\noindent \\ for all $x,y \in X$ and applying Remark~\ref{rem4.1}. However, there are still downsides to total convexity. The first, and most obvious downside is that total convexity is only defined for geodesic spaces, so it is not a universal notion of convexity of sets in metric spaces. The second, less obvious downside is that total convexity and $d$-convexity actually agree on geodesic spaces, making the notion redundant. Namely, with the map $I: X^2 \rightarrow \mathcal{P}(X)$ defined as above, the following proposition is well-known:\\

\begin{proposition}\label{prop4.5}
For all $x,y \in X$, it holds that $I(x,y)=[x,y]$.
\end{proposition}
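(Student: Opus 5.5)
We prove the two inclusions $I(x,y)\subseteq [x,y]$ and $[x,y]\subseteq I(x,y)$ separately, throughout assuming $X$ is geodesic as stipulated in this subsection.

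For $I(x,y)\subseteq [x,y]$, let $z\in\textup{im}(\gamma)$ for some geodesic $\gamma:[0,d(x,y)]\to X$ joining $x,y$, say $z=\gamma(t)$. Since $\gamma$ is an isometric embedding with $\gamma(0)=x$ and $\gamma(d(x,y))=y$, we get
$$d(x,z)=|t-0|=t,\qquad d(z,y)=|d(x,y)-t|=d(x,y)-t.$$
Adding these gives $d(x,z)+d(z,y)=d(x,y)$, so $z\in[x,y]$. This direction is immediate.

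For $[x,y]\subseteq I(x,y)$, let $z\in[x,y]$ and set $a=d(x,z)$, $b=d(z,y)$, so $a+b=d(x,y)$. Since $X$ is geodesic, choose a geodesic $\gamma_1:[0,a]\to X$ from $x$ to $z$ and a geodesic $\gamma_2:[0,b]\to X$ from $z$ to $y$. Concatenate them into
$$\gamma(t)=\begin{cases}\gamma_1(t), & t\in[0,a],\\ \gamma_2(t-a), & t\in[a,a+b].\end{cases}$$
This is well-defined and continuous since $\gamma_1(a)=z=\gamma_2(0)$. It remains to show $\gamma$ is an isometric embedding, so that it is a geodesic joining $x,y$ with $z=\gamma(a)$ in its image.

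The main obstacle is the isometry check for $s\in[0,a]$ and $t\in[a,a+b]$; pairs within a single piece are handled by $\gamma_1$ or $\gamma_2$ being isometric. Set $p=\gamma(s)$ and $q=\gamma(t)$. The triangle inequality through $z$ gives
$$d(p,q)\le d(p,z)+d(z,q)=(a-s)+(t-a)=t-s.$$
For the reverse bound, use the triangle inequality along $x,p,q,y$ together with $z\in[x,y]$:
$$a+b=d(x,y)\le d(x,p)+d(p,q)+d(q,y)=s+d(p,q)+(a+b-t),$$
hence $d(p,q)\ge t-s$. Therefore $d(\gamma(s),\gamma(t))=|t-s|$, so $\gamma$ is a geodesic and $z\in I(x,y)$. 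Combining both inclusions yields $I(x,y)=[x,y]$.
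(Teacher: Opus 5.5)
Your proof is correct and follows the paper's argument essentially step for step. The first inclusion comes from the isometry property of a geodesic. The second comes from concatenating geodesics from $x$ to $z$ and from $z$ to $y$, and you verify the cross-term isometry exactly as the paper does: the triangle inequality through $z$ gives the upper bound, and the chain $x,p,q,y$ together with $d(x,y)=a+b$ gives the lower bound.
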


\vspace{0.3 cm}

\begin{proof}
We first show the inclusion $I(x,y)\subseteq [x,y]$. Choose $p \in I(x,y)$. Then, by definition, $p=\gamma(t)$ for some geodesic $\gamma$ joining $x,y$ and $t \in [0, d(x,y)]$. Furthermore, using the defining properties of geodesics, we easily obtain

\begin{align*}
d(x,p)+d(p,y)&=d(\gamma(0), \gamma(t))+d(\gamma(t), \gamma(d(x,y)))\\
&=t+d(x,y)-t=d(x,y).
\end{align*}

\noindent \\ Hence, $p \in [x,y]$, so $I(x,y)\subseteq [x,y]$. Finally, we show $[x,y] \subseteq I(x,y)$. Namely, let $p \in [x,y]$. As $X$ is geodesic, there exist geodesics $\gamma_1$ and $\gamma_2$ joining $x,p$ and $p,y$, respectively. Now, we can consider their concatenation: \\

$$\gamma(t)=
\begin{cases}
\gamma_1(t), & t\in [0,d(x,p)],\\[4pt]
\gamma_2\bigl(t-d(x,p)\bigr), & t\in [d(x,p),d(x,y)]
\end{cases}$$\\

\noindent for all $t \in [0, d(x,y)]$. This expression clearly defines a path from $x$ to $y$. Furthermore, it is clear that the restrictions $\gamma\mid_{[0, d(x,p)]}$ and  $\gamma\mid_{[d(x,p), d(x,y)]}$ are isometric embeddings. Thus, to show that $\gamma$ is itself an isometric embedding, it suffices to establish that\\

$$d(\gamma(t_1), \gamma(t_2))=t_2-t_1$$

\noindent \\ for all $t_1 \in [0, d(x,p)]$ and $t_2 \in [d(x,p), d(x,y)]$. Using the triangle inequality, we have that\\

\begin{align*}
d(\gamma(t_1), \gamma(t_2)) &\leq 
d(\gamma(t_1),\gamma(d(x,p)))+d(\gamma(d(x,p)), \gamma(t_2))\\
&=d(x,p)-t_1+t_2-d(x,p)=t_2-t_1.
\end{align*}

\noindent \\ Analogously, we have that\\

\begin{align*}
d(x,y)&=d(\gamma(0), \gamma(d(x,y))\\
&\leq d(\gamma(0), \gamma(t_1))+d(\gamma(t_1), \gamma(t_2))+d(\gamma(t_2), \gamma(d(x,y))\\
&=t_1+d(\gamma(t_1), \gamma(t_2))+d(x,y)-t_2.
\end{align*}

\noindent \\ Hence, $t_2-t_1 \leq d(\gamma(t_1), \gamma(t_2))$. We conclude that $d(\gamma(t_1), \gamma(t_2))=t_2-t_1$, so $\gamma$ is indeed an isometric embedding. Consequently, $\gamma$ is a geodesic joining $x,y$. As $p \in \textup{im}(\gamma)$, we indeed have that $p \in I(x,y)$, so $[x,y] \subseteq I(x,y)$. This finishes the proof.
\end{proof}

\vspace{0.3 cm}

\noindent As an immediate corollary of Proposition~\ref{prop4.5}, we get that $\mathcal{C}^G_d=\mathcal{C}_d$, as claimed.

\medskip

\subsection{$W$-Convexity}\label{sub4.3}
\leavevmode

\noindent In his seminal 1970 paper (see \cite{takahashi1970convexity}), Takahashi generalized many results of the theory of linear convexity, such as the existence of fixed points of various classes of non-expansive maps, to metric spaces endowed with an adequately defined notion of taking "convex combinations" of points, which he called \emph{convex structures}. Formally, Takahashi introduced the following definition:

\vspace{0.2 cm}

\begin{definition}
A \emph{convex structure} on $X$ is a map $W: X^2 \times [0,1] \rightarrow X$ which satisfies the inequality

$$d(p,W(x,y;\alpha)) \leq \alpha d(x,p)+(1-\alpha) d(p,y)$$

\noindent \\ for all $p,x,y \in X$ and $\alpha \in [0,1]$.
\end{definition}

\noindent \\ The space $X$ is said to be \emph{Takahashi convex} provided that $X$ can be endowed with a convex structure $W$. In this subsection, we will assume that $X$ is Takahashi convex. Takahashi's framework proved to be quite useful due to the ability of convex structures to define linear convexity. Indeed, when $X$ is a real vector space and $d$ is compatible with the linear structure on $X$, one easily checks that the rule $W(x,y;\alpha)=\alpha x+(1-\alpha)y$ defines a convex structure on a given linearly convex subset $C \subseteq X$.\\

\noindent Given a convex structure $W$ on $X$, a subset $C \subseteq X$ is said to be \emph{$W$-convex} if $W(x,y; \alpha) \in C$ holds for all $x,y \in C$ and $\alpha \in [0,1]$. The collection of $W$-convex subsets of $X$ forms a convexity, which we will denote by $\mathcal{C}_W$. One can see this by setting\\

$$I(x,y)=\left\{ W(x,y; \alpha): \alpha \in [0,1]\right\}$$

\noindent \\ for all $x,y \in X$ and applying Remark~\ref{rem4.1}. Furthermore, Takahashi observed the following proposition:\\

\begin{proposition}[Takahashi, {\cite[\S 2]{takahashi1970convexity}}]\label{prop4.7}
Let $x,y \in X$ and $\alpha \in [0,1]$. Then, the following relations hold:

\begin{align*}
d(x, W(x,y;\alpha))&=(1-\alpha)d(x,y)\\
d(W(x,y;\alpha),y)&=\alpha d(x,y).
\end{align*}
\end{proposition}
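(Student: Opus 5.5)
The plan is to plug the endpoints $p=x$ and $p=y$ into the defining inequality of a convex structure. Together with the triangle inequality, these give two upper bounds whose sum cannot be improved.

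Set $w=W(x,y;\alpha)$. Taking $p=x$ in the defining inequality gives
\begin{align*}
d(x,w)&\leq \alpha d(x,x)+(1-\alpha)d(x,y)=(1-\alpha)d(x,y),
\end{align*}
since $d(x,x)=0$. Taking $p=y$ gives
\begin{align*}
d(y,w)&\leq \alpha d(x,y)+(1-\alpha)d(y,y)=\alpha d(x,y).
\end{align*}
Adding the two bounds yields $d(x,w)+d(w,y)\leq d(x,y)$. The triangle inequality gives the reverse inequality $d(x,y)\leq d(x,w)+d(w,y)$, so equality holds; in particular $w\in[x,y]$.

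Since the sum of the two upper bounds is attained, neither inequality can be strict. If, say, $d(x,w)<(1-\alpha)d(x,y)$, then adding $d(w,y)\leq\alpha d(x,y)$ would give $d(x,w)+d(w,y)<d(x,y)$, contradicting the triangle inequality. The same argument applies to the other bound. Hence $d(x,w)=(1-\alpha)d(x,y)$ and $d(w,y)=\alpha d(x,y)$, which is exactly the claim.

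There is no real obstacle here. The only step needing care is to specialize $p$ to the endpoints themselves, where one coefficient in the convex structure inequality vanishes, and then to observe that two upper bounds whose sum equals a lower bound must each be tight. As a byproduct, $I(x,y)=\{W(x,y;\alpha):\alpha\in[0,1]\}\subseteq[x,y]$, which suggests that $\mathcal{C}_d\subseteq\mathcal{C}_W$.
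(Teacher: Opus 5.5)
Your proof is correct and complete. The paper gives no proof of its own here and simply cites Takahashi; your argument is the standard one: specialize $p$ to the endpoints $x$ and $y$, add the two resulting bounds, and use the triangle inequality to force both bounds to be equalities.
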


\vspace{0.4 cm}

\noindent In other words, Propositon~\ref{prop4.7} claims that for a fixed $x,y \in X$ and $\alpha \in [0,1]$, $z_{\alpha}=W(x,y;\alpha)$ is the point associated to $1-\alpha$ in the sense of Menger convexity. It follows that $I(x,y) \subseteq [x,y]$ for all $x,y \in X$, with the map $I: X^2 \rightarrow \mathcal{P}(X)$ defined as above. Consequently, $d$-convexity implies $W$-convexity i.e., $\mathcal{C}_{W}$ is finer than $\mathcal{C}_d$ for any convex structure $W$. To see this on a particular example, consider again the segment $S=[(0,0),(1,1)]_{d_2}$ under the Euclidean norm on $\mathbb{R}^2$. Furthermore, the map $W$ taking convex combinations of points is a convex structure under the $\ell^{1}$ norm on $\mathbb{R}^2$. Now, $S$ is certainly $W$-convex, but it is not convex under the $\ell^1$ norm. That being said, when $X$ is strictly convex, $W$- and $d$-convexity agree on $X$ ($C_W=C_d$), as a consequence of Proposition~\ref{prop4.7}.\\

\noindent On another note, despite the ability of $W$-convexity to adequately model linear convexity, the notion is rather weak, as it is only defined for Takahashi convex metric spaces. Indeed, it is evident from Proposition~\ref{prop4.7} that a Takahashi convex metric space must, at the very least, be Menger convex.
 
\medskip

\subsection{Metric and Menger Convexity}\label{sub4.4}
\leavevmode

\noindent Assuming that $X$ is metrically convex, we can say that a subset $C \subseteq X$ is \emph{metrically convex} if $C$ is metrically convex for the induced metric on $C$. Analogously, we define what it means for $C$ to be \emph{Menger convex} when $X$ is a Menger convex space. For instance, taking $X=\mathbb{R}^n$, we observe that Menger convexity of subsets of $X$ corresponds precisely to linear convexity on $X$. However, as is the case with weak convexity, metric and Menger convexity are poor notions of convexity from the standpoint of axiomatic convexity theory. Specifically, while the collection of metrically (Menger) convex subsets of a metrically (Menger) convex space $X$ satisfies the axioms~\hyperref[c1]{C1} and ~\hyperref[c3]{C3}, it need not satisfy the axiom~\hyperref[c2]{C2} (closure under arbitrary intersection). Consequently, metrically (Menger) convex subsets of $X$ need not form a convexity. An instance of the failure of axiom~\hyperref[c2]{C2} for metrically (Menger) convex subsets is given by the same construction in Example~\ref{ex4.4}.

\vspace{0.3 cm}

Our discussion in Subsections~\ref{sub4.1}-\ref{sub4.4} offers compelling evidence that $d$-convexity is the optimal notion of convexity of sets in metric spaces that is well-suited to and useful in all general frameworks. In the final part of this section, we will explore the interaction of $d$-convexity and the inherent topological structure of a metric space. Unlike in normed spaces, it is well-known that the interior and closure of convex sets in metric spaces need not be convex. We will use the following terminology:\\

\begin{definition}
A metric space is said to have an \emph{interior-} (\emph{closure-}) \emph{stable convexity} if the interior (closure) of every convex set is convex.
\end{definition}

\noindent \\ We provide the following examples of a metric space with a convexity that is not interior-stable:\\

\begin{example}\label{ex4.9}
Consider the subset $X \subset \mathbb{R}^2$ represented by the diagram below:

\begin{figure}[H]
    \centering
    \includegraphics[width=6.5 cm]{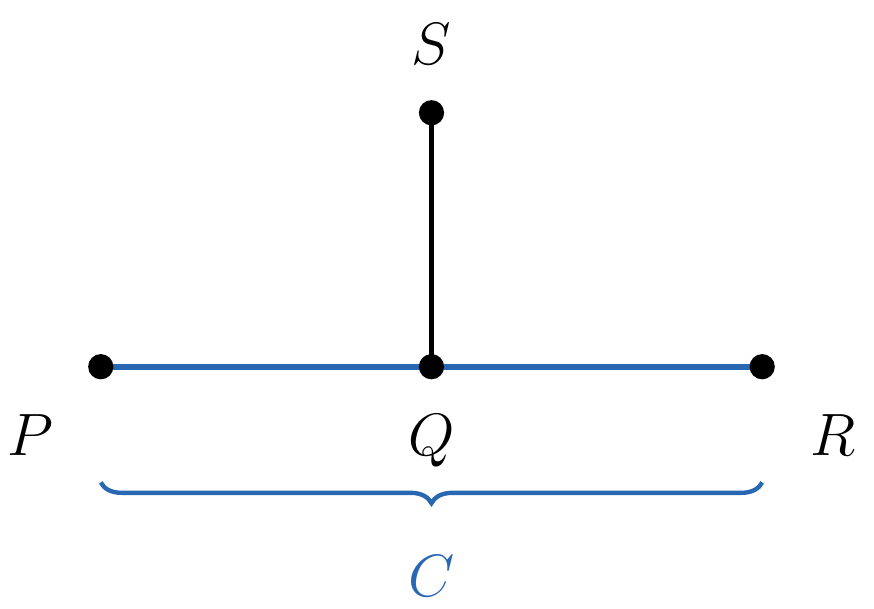}
   \caption{A counterexample to interior-stability.} \label{fig7}
    \end{figure}

\noindent \\ endowed with the intrinsic metric. Additionally, let $C$ denote the horizontal base segment of $X$. Then, $C$ is clearly convex as a subset of $X$. However, we have that $\textup{int}(C)=C-\left\{ P, Q, R\right\}$, which is non-convex. Indeed, choosing a pair of points in the interior of the segments $[P,Q]$ and $[Q,R]$, respectively, the segment joining these points must necessarily cross the point $Q$.

\end{example}

\noindent \\ Likewise, for closure-stability, we have the following counterexample:\\

\begin{example}\label{ex4.10}
Consider the unit circle $S^1 \subset \mathbb{R}^2$ endowed with the intrinsic metric and $C_1 \subset S^1$ as defined in Figure~\ref{fig6}. The subset $C=C_1-\left\{ \pm p \right\}$ is clearly convex. However, we have that $\overline{C}=C_1$, which is non-convex. Indeed, $\left\{ \pm p \right\} \subseteq C_1$, but $[p, -p]=S^1$.
\end{example}

\noindent \\ Fundamentally, the interior-stability of normed spaces is a consequence of the fact that the map taking points to their convex combination is open (for a fixed scalar). Exploiting this observation, we were able to characterize metric spaces with interior-stable convexities as follows:\\

\begin{theorem}\label{thm4.11}
A metric space $(X,d)$ has an interior-stable convexity if and only if \\ $\textup{conv}\left ( B(p, \varepsilon) \cup B(q, \delta)  \right )$ is an open set for all $p,q \in X$ and $\varepsilon, \delta>0$.
\end{theorem}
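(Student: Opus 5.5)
The forward direction is immediate. Assume the convexity is interior-stable and fix $p,q\in X$ and $\varepsilon,\delta>0$. Put $C=\textup{conv}(B(p,\varepsilon)\cup B(q,\delta))$. By stability, $\textup{int}(C)$ is convex. Since $B(p,\varepsilon)$ and $B(q,\delta)$ are open subsets of $C$, both lie in $\textup{int}(C)$. Minimality of the convex hull then gives $C\subseteq \textup{int}(C)$, so $C$ is open.

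For the converse, assume every such hull is open and let $C$ be convex. We must show that $[x,y]\subseteq \textup{int}(C)$ whenever $x,y\in \textup{int}(C)$. The plan has three steps.

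\begin{enumerate}
\item[(1)] Since $x$ and $y$ are interior points, choose $\varepsilon,\delta>0$ with $B(x,\varepsilon)\subseteq C$ and $B(y,\delta)\subseteq C$.
\item[(2)] Then $B(x,\varepsilon)\cup B(y,\delta)\subseteq C$. As $C$ is convex, minimality of the hull gives $D:=\textup{conv}(B(x,\varepsilon)\cup B(y,\delta))\subseteq C$.
\item[(3)] By hypothesis $D$ is open. Being an open subset of $C$, it satisfies $D\subseteq \textup{int}(C)$. Also $D$ is convex and contains $x,y$, so $[x,y]\subseteq D\subseteq \textup{int}(C)$.
\end{enumerate}

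Hence $\textup{int}(C)$ is convex. This argument uses only axioms (C1)–(C2) through the definition of the convex hull, together with the fact that an open subset of a set lies in its interior.

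I expect no serious obstacle. The only point requiring care is in the converse: one must take the hull of balls around the \emph{endpoints} $x,y$ themselves, not around arbitrary points. That choice is what places $[x,y]$ inside an open convex subset of $C$.
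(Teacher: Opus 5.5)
Your proof is correct and matches the paper's argument: for the direction assuming interior-stability, you use minimality of the hull and convexity of $\textup{int}(\textup{conv}(S))$ to get $\textup{conv}(S)\subseteq \textup{int}(\textup{conv}(S))$. For the converse, you place $[x,y]$ inside the open convex set $\textup{conv}(B(x,\varepsilon)\cup B(y,\delta))\subseteq C$, exactly as the paper does (it only presents the two directions in the opposite order).
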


\vspace{0.3 em}

\begin{proof}
Assume that $\textup{conv}\left ( B(p, \varepsilon) \cup B(q, \delta)  \right )$ is open for all $p,q \in X$ and $\varepsilon, \delta>0$ and let $C \subseteq X$ be convex.  Without loss of generality, we can assume that $C$ has a non-empty interior. Additionally, choose $p,q \in \textup{int}(C)$. By definition, there exist $\varepsilon, \delta>0$ so that $B(p, \varepsilon), B(q, \delta) \subseteq C$. Thus, it follows that $B(p, \varepsilon) \cup B(q, \delta) \subseteq C$. Furthermore, as $C$ is convex, this implies that $\textup{conv}\left ( B(p, \varepsilon) \cup B(q, \delta)  \right ) \subseteq C$. Finally, as $\textup{conv}\left ( B(p, \varepsilon) \cup B(q, \delta)  \right )$ was assumed to be open, we get that

$$\textup{conv}\left ( B(p, \varepsilon) \cup B(q, \delta)  \right ) \subseteq \textup{int}(C).$$

\noindent \\ Now, the inclusion $[p,q] \subseteq \textup{conv}\left ( B(p, \varepsilon) \cup B(q, \delta)  \right )$ clearly implies $[p,q] \subseteq \textup{int}(C)$, so $\textup{int}(C)$ is convex, and $X$ has an interior-stable convexity.\\

Conversely, assume that $X$ has an interior-stable convexity. Let $p,q \in X$ and $\varepsilon, \delta>0$. Additionally, for the sake of brevity, set $S=B(p, \varepsilon) \cup B(q, \delta)$. It suffices to establish that $\textup{conv}\left ( S  \right ) \subseteq \textup{int}\left (\textup{conv}(S)\right )$. Clearly, we have the inclusion $S \subseteq \textup{conv}(S)$. Furthermore, as $S$ is open, it follows that $S \subseteq \textup{int}(\textup{conv}(S))$. Similarly, as $\textup{conv}(S)$ is convex, by assumption, we have that $\textup{int}(\textup{conv}(S))$ is also convex. Hence, $\textup{conv}(S) \subseteq \textup{int}(\textup{conv}(S))$, as desired.
\end{proof}

\vspace{0.3 cm}

\begin{figure}[H]
    \centering
    \includegraphics[width=9 cm]{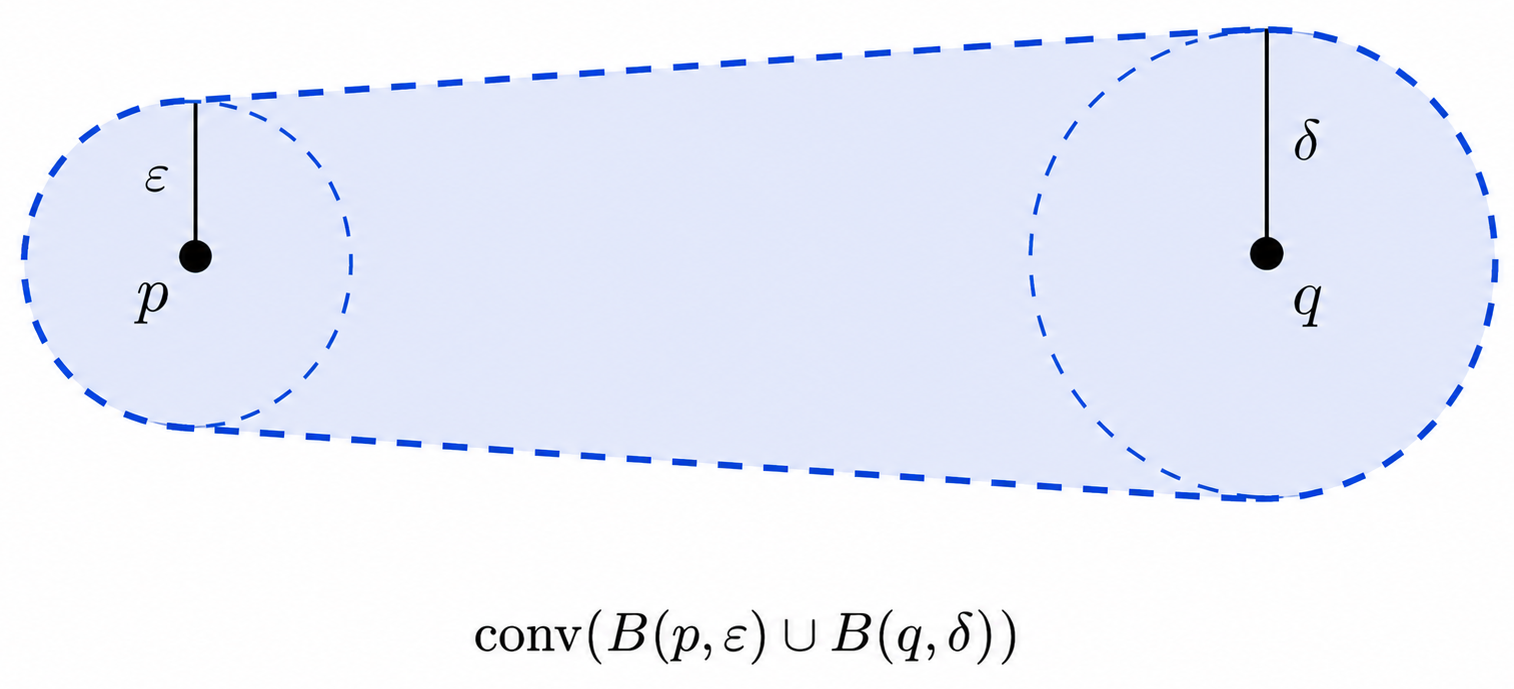}
   \caption{An illustration of the equivalent condition of Theorem~\ref{thm4.11}.} \label{fig8}
    \end{figure}

\noindent \\ We note that certain structural aspects of our proof of Theorem~\ref{thm4.11} are closely related to the recent work of Anderson et al. on interior-stability in the more general framework of topological betweenness structures (see \cite[\S 6]{anderson2021convexity}). On the other hand, the failure of metric spaces to admit closure-stable convexities in general (as is the case in Example~\ref{ex4.10}) is a consequence of the poor interaction of convergence and betweenness. Specifically, for an arbitrary metric space $X$, given points $x,y,z \in X$ with $z \in [x,y]$, the existence of convergent sequences $x_n \rightarrow x$ and $y_n \rightarrow y$ does not necessarily imply the existence of a sequence $z_n$ satisfying 
$z_n \in [x_n, y_n]$ for all $n \in \mathbb{N}$ and $z_n \rightarrow z$. Furthermore, this phenomenon is closely related to the branching of geodesics in metric spaces.\\

Apart from interior- and closure-stability, another aspect of the interaction of $d$-convexity and the topology of metric spaces that is of interest to study is the problem of classification of metrics that induce congruent metric segments or convex subsets. We introduce the following terminology:\\

\begin{definition}
A pair $d, \rho$ of metrics on a set $X$ is said to be\\

\begin{enumerate}
\itemsep0.3em
\item[(a)] \emph{segmentally equivalent} if 
the metrics induce congruent metric segments i.e., if $[x,y]_d=[x,y]_{\rho}$ holds for all $x,y \in X$;
\item[(b)] \emph{convexly equivalent} if the metrics induce congruent convex subsets i.e., if $\mathcal{C}_d=\mathcal{C}_{\rho}$.
\end{enumerate}
\end{definition}

\noindent \\ Clearly, segmental equivalence is stronger than convex equivalence. Moreover, the converse implication indeed fails, as can be seen from the following example:\\

\begin{example}\label{ex4.13}
Consider the connected graph $G$ represented by the diagram below:\\

\begin{figure}[H]
    \centering
    \includegraphics[width=6 cm]{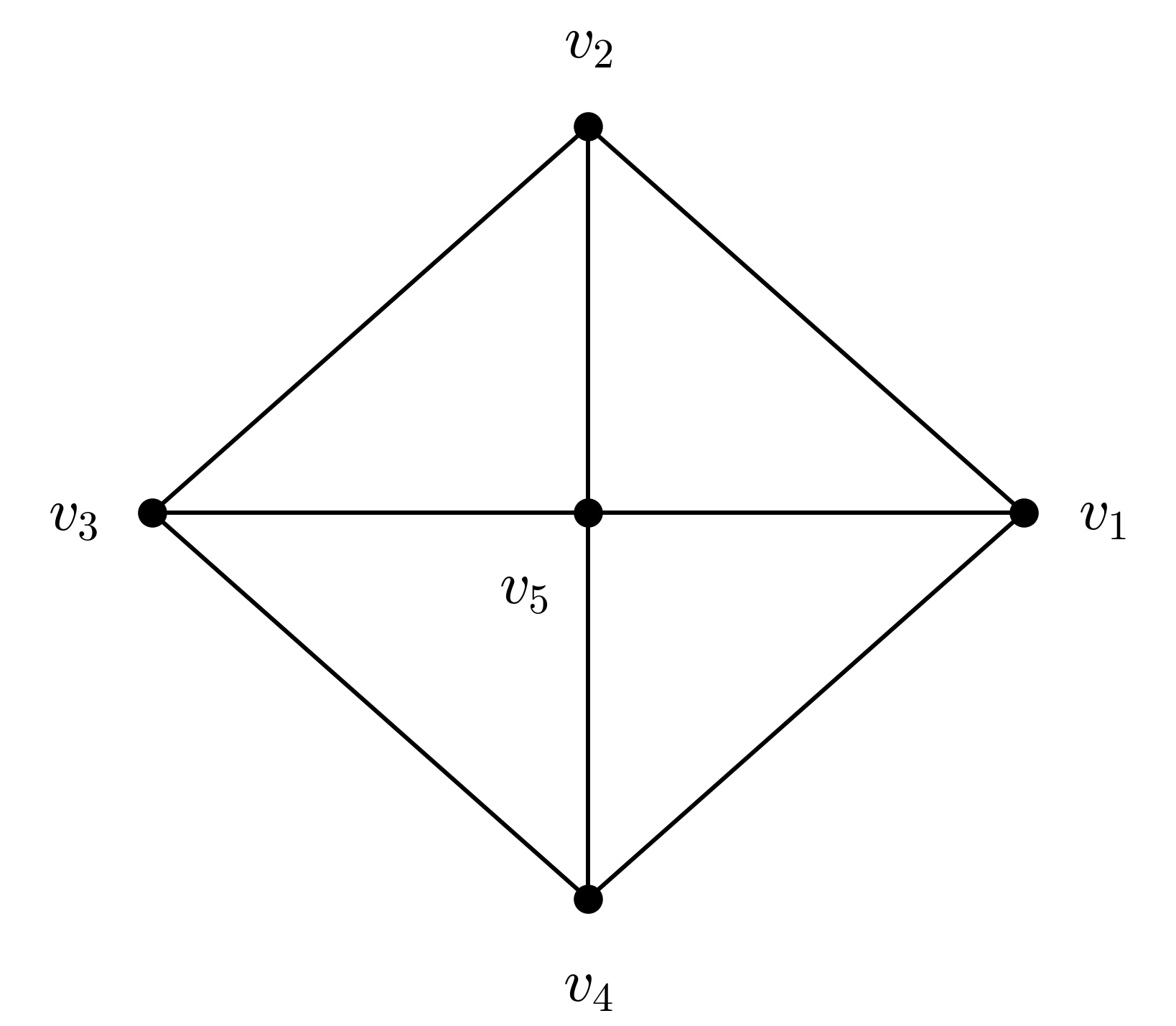}
   \caption{A connected graph on 5 vertices as a counterexample to segmen-\\
   tal equivalence.} \label{fig9}
    \end{figure}

\noindent \\ Take $d$ to be the usual geodesic metric on $G$. Furthermore, introduce weights on $G$ so that all edges have weight $2$ except the edge $v_2v_5$, which is to have weight $3$, and take $\rho$ to be the weighted geodesic metric on $G$ induced by this weighting. We claim that $d$ and $\rho$ are convexly equivalent. Indeed, one easily checks that under both metrics, the only non-trivial convex subsets of $G$ are the subsets corresponding to vertices, edge pairs, and the "central triangles" $\left\{ v_1, v_2, v_5\right\}$, $\left\{ v_2, v_3, v_5\right\}$, $\left\{ v_3, v_4, v_5\right\}$, and $\left\{ v_1, v_4, v_5\right\}$. However, $d$ and $\rho$ are not segmentally equivalent. Indeed, while $[v_2,v_4]_d=G$, we have that $[v_2,v_4]_{\rho}=G-\left\{ v_5 \right\}$.
\end{example}

\noindent \\ One easily observes that a pair of metrics $d, \rho$ on $X$ is convexly equivalent if and only if $\textup{conv}_{d}(\left\{ x,y\right\})=\textup{conv}_{\rho}(\left\{ x,y\right\})$ holds for all $x,y \in X$. Furthermore, when $d$ and $\rho$ induce convex metric segments, we have $\textup{conv}_{d}(\left\{ x,y\right\})=[x,y]_d$ and the analogous relation for $\rho$. Hence, by our previous remark, under such an assumption, the convex equivalence of $d$ and $\rho$ does imply the segmental equivalence of the metrics. By contrast, the metric $\rho$ on the graph in Example~\ref{ex4.13} does not induce convex metric segments (indeed, $[v_2,v_4]_{\rho}$ is non-convex), hence the counterexample. Additionally, we observe that convex equivalence is disjoint from topological equivalence. For instance, the $\ell^1$ and $\ell^2$ norms on $\mathbb{R}^2$ are topologically equivalent, but not convexly equivalent. Conversely, for the Euclidean metric $d$ on $\mathbb{R}$, the metric $\sqrt{d}$ is convexly equivalent to the discrete metric on $\mathbb{R}$, but not topologically equivalent.\\

\noindent Segmental and convex equivalence seem to be quite weak as notions of equivalence of metrics. For instance, as a consequence of Proposition~\ref{prop1.1}, one easily concludes that any pair of strictly convex norms induces segmentally equivalent metrics on the normed space. This makes it especially difficult to explicitly classify all metrics on a given set up to segmental or convex equivalence. However, we were successful in classifying the specialized case of metrics which are segmentally equivalent to the discrete metric. Namely, let us introduce the following definition:\\

\begin{definition}
A metric $d$ on a set $X$ is said to be \emph{segmentally discrete} if $d$ is segmentally equivalent to the discrete metric on $X$ i.e., if $[x,y]=\left\{ x,y\right\}$ holds for all $x,y \in X$.
\end{definition}

\noindent \\ One eventually observes that a large class of segmentally discrete metrics is given by $(\alpha-)$ \emph{snowflakes} i.e., metrics $d$ for which there exists a scalar $\alpha  \in (0,1)$ so that $d^{1\slash \alpha}$ is also a metric. Indeed, this follows as an immediate consequence of the  strict concavity of the map $t \mapsto t^{\alpha}$, as observed by Papadopoulos (see \cite{papadopoulos2005metric}). While not all segmentally discrete metrics are snowflakes, we were able to prove the following "local snowflaking" characterization of segmentally discrete metrics:\\

\begin{theorem}\label{thm4.15}
A metric $d$ on a set $X$ is segmentally discrete if and only if the restriction $d \mid_{F^2}$ is a snowflake for every non-empty finite subset $F \subseteq X$.
\end{theorem}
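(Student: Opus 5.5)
The reverse implication is immediate. Assume every finite restriction is a snowflake, and let $x,y\in X$ and $z\in[x,y]$. Take $F=\{x,y,z\}$ and choose $\alpha\in(0,1)$ such that $d^{1/\alpha}$ is a metric on $F$. Put $a=d(x,z)$, $b=d(z,y)$ and $c=d(x,y)$, so that $a+b=c$. Suppose $a,b>0$. Then strict convexity of $t\mapsto t^{1/\alpha}$ together with $1/\alpha>1$ gives
$$a^{1/\alpha}+b^{1/\alpha}<(a+b)^{1/\alpha}=c^{1/\alpha}.$$
This violates the triangle inequality for $d^{1/\alpha}$, which is the observation attributed to Papadopoulos applied on $F$. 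Hence $a=0$ or $b=0$, so $z\in\{x,y\}$, and $d$ is segmentally discrete.

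For the forward implication, assume $[x,y]=\{x,y\}$ for all $x,y$, and fix a finite $F$ with $|F|\ge 2$ (the case $|F|=1$ is trivial). We need a single exponent $p>1$ such that $d^p$ satisfies the triangle inequality on all triples in $F$. Take a triple $u,v,w\in F$ with $v\notin\{u,w\}$. Segmental discreteness gives $v\notin[u,w]$, so
$$d(u,v)+d(v,w)>d(u,w).$$
Triples with a repeated point satisfy the inequality for every power $p$. So it suffices to treat the finitely many ordered triples of distinct points and to show that each strict inequality $a+b>c$, with $a,b,c>0$, persists for the exponent $p$, i.e.\ $a^p+b^p\ge c^p$, once $p>1$ is close enough to $1$.

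The function $h(p)=a^p+b^p-c^p$ is continuous in $p$ and satisfies $h(1)=a+b-c>0$. Therefore there is $\eta>0$ with $h(p)>0$ for all $p\in[1,1+\eta)$. Let $\eta_F$ be the minimum of these $\eta$ over the finitely many triples; it is positive. Choose $p=1+\eta_F/2$ and $\alpha=1/p\in(0,1)$. Then $d^{1/\alpha}$ restricted to $F^2$ is symmetric, vanishes exactly on the diagonal, and satisfies every triangle inequality, so $d|_{F^2}$ is an $\alpha$-snowflake.

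The only delicate point is the uniformity in the exponent. It works because $F$ is finite: there are finitely many triples, each with a positive margin $a+b-c$. For infinite $X$ these margins can accumulate to $0$, which is why the theorem is only a local statement.
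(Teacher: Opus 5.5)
Your proof is correct and follows essentially the same approach as the paper. The forward direction uses continuity in the exponent at $p=1$ for each of the finitely many triples of distinct points of $F$ and then takes a uniform exponent; the reverse direction applies the snowflake condition to $F=\{x,y,z\}$, where your strict superadditivity of $t\mapsto t^{1/\alpha}$ is the paper's strict subadditivity of $t\mapsto t^{\alpha}$ read in the other direction. One cosmetic point: for $|F|=2$ there are no triples of distinct points, so your $\eta_F$ is a minimum over an empty set; any $p>1$ works there, which is why the paper simply reduces to $|F|>2$.
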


\vspace{0.3 cm}

\begin{proof}
Assume first that $d$ is segmentally discrete and let $F\subseteq X$ be finite and non-empty. To prove that $d^{1\slash \alpha}\mid_{F^2}$ is a metric for some $\alpha \in (0,1)$, it suffices to establish the triangle inequality for $d^{ 1\slash \alpha}\mid_{F^2}$. Thus, without loss of generality, we can assume that $\left | F\right |>2$. Furthermore, let $x,y,z \in F$ be distinct points. The segmental discreteness of $d$ gives $d(x,z)+d(z,y)>d(x,y)$. Additionally, by continuity, there exists $\varepsilon>0$ so that $d(x,z)^{1\slash \alpha}+d(z,y)^{1\slash \alpha}>d(x,y)^{1\slash \alpha}$ holds for all $1\slash(1+\varepsilon) < \alpha<1$. As $F$ is finite, it follows that there exists a global $\delta \in (0,1)$ so that $d^{1\slash \alpha}\mid_{F^2}$ satisfies the triangle inequality for all $\alpha \in (\delta, 1)$. Finally, a choice of $\alpha \in (\delta,1)$ ensures that  $d^{1\slash \alpha}\mid_{F^2}$ is a metric.\\

Conversely, assume that $d \mid_{F^2}$ is a snowflake for every non-empty finite subset $F \subseteq X$. Furthermore, let $x,y \in X$ and for the sake of contradiction, assume there exists $z \in [x,y]_{d}$ so that $z \notin \left\{ x,y\right\}$. Then, consider $F=\left\{ x,y,z\right\}$. By assumption, there exists $\alpha \in (0,1)$ so that $d^{ 1\slash \alpha}\mid_{F^2}$ is also a metric. Hence, the triangle inequality forces $d(x,z)^{1 \slash \alpha}+d(z,y)^{1\slash \alpha} \geq d(x,y)^{1 \slash \alpha}$. Now, the strict concavity of the map $t \mapsto t^{\alpha}$ implies

\begin{align*}
d(x,z)+d(z,y)>\left [ d(x,z)^{1 \slash \alpha}+d(z,y)^{1\slash \alpha} \right ]^{\alpha} \geq d(x,y),
\end{align*}

\noindent \\ which contradicts our assumption that $z \in [x,y]_{d}$. Hence, $[x,y]=\left\{ x,y\right\}$, and $d$ is segmentally discrete. This finishes the proof.
\end{proof}

\vspace{0.3 cm}

\noindent Additionally, we were able to make progress in the problem of classification of metrics up to segmental equivalence by developing the following method of generating segmentally equivalent metrics:\\

\begin{proposition}\label{prop4.16}
Let $d_1,...,d_n$ be pseudometrics on a set $X$ and let $\Phi, \Psi$ be segmentally equivalent norms whose restrictions to $\mathbb{R}_{\geq 0}^n$ are strictly monotone under the coordinatewise order. If the maps $d_{\Phi}, d_{\Psi}$ defined by the rules

\begin{align*}
d_{\Phi}: (x,y) &\mapsto \Phi(d_1(x,y),...,d_n(x,y))\\
d_{\Psi}: (x,y) &\mapsto \Psi(d_1(x,y),...,d_n(x,y))
\end{align*}

\noindent \\ are positive-definite, then $d_{\Phi}$ and $d_{\Psi}$ are segmentally equivalent metrics.
\end{proposition}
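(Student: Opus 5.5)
The plan is to first check that $d_{\Phi}$ and $d_{\Psi}$ are metrics, and then characterize membership in a $d_{\Phi}$-segment purely in terms of the pseudometrics $d_1,\dots,d_n$ and the segments of the normed space $(\mathbb{R}^n,\Phi)$. The characterization will depend on $\Phi$ only through the segments $[0,w]_{\Phi}$, which by hypothesis coincide with $[0,w]_{\Psi}$. Throughout, write
$$D(x,y)=(d_1(x,y),\dots,d_n(x,y))\in\mathbb{R}^n_{\geq 0},$$
so that $d_{\Phi}=\Phi\circ D$. I read strict monotonicity on $\mathbb{R}^n_{\geq 0}$ as follows: if $0\leq u\leq v$ coordinatewise and $u\neq v$, then $\Phi(u)<\Phi(v)$. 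This also gives $\Phi(u)\leq\Phi(v)$ whenever $0\leq u\leq v$.

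\textbf{Metric axioms.} Symmetry is inherited from the $d_i$, and positive-definiteness is assumed. For the triangle inequality, fix $x,y,z\in X$ and set $a=D(x,z)$ and $b=D(z,y)$. The triangle inequalities for the $d_i$ give $0\leq D(x,y)\leq a+b$ coordinatewise. Monotonicity of $\Phi$ followed by subadditivity of the norm then gives
$$d_{\Phi}(x,y)=\Phi(D(x,y))\leq \Phi(a+b)\leq \Phi(a)+\Phi(b)=d_{\Phi}(x,z)+d_{\Phi}(z,y).$$
The same argument applies verbatim to $\Psi$.

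\textbf{Characterizing segments.} By definition, $z\in[x,y]_{d_{\Phi}}$ exactly when both inequalities in the chain above are equalities. I will show this happens if and only if
$$D(x,y)=a+b \quad\text{and}\quad a\in[0,a+b]_{\Phi}.$$
The first equality, $\Phi(D(x,y))=\Phi(a+b)$, combined with $D(x,y)\leq a+b$, forces $D(x,y)=a+b$; otherwise strict monotonicity would give a strict inequality. This is where strict, rather than mere, monotonicity is essential, and it is the step I expect to need the most care. The second equality, $\Phi(a+b)=\Phi(a)+\Phi(b)$, can be rewritten as
$$\Phi(a-0)+\Phi\big((a+b)-a\big)=\Phi\big((a+b)-0\big),$$
which says precisely that $a\in[0,a+b]_{\Phi}$ in the normed space $(\mathbb{R}^n,\Phi)$. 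Conversely, if both conditions hold, then both inequalities in the chain are equalities, so $z\in[x,y]_{d_{\Phi}}$.

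\textbf{Conclusion.} The same characterization holds for $\Psi$, with $[0,a+b]_{\Psi}$ in place of $[0,a+b]_{\Phi}$. Since $\Phi$ and $\Psi$ are segmentally equivalent, $[0,a+b]_{\Phi}=[0,a+b]_{\Psi}$. The condition $D(x,y)=a+b$ does not involve the norm at all. Hence $z\in[x,y]_{d_{\Phi}}$ if and only if $z\in[x,y]_{d_{\Psi}}$ for all $x,y,z\in X$, so $d_{\Phi}$ and $d_{\Psi}$ are segmentally equivalent metrics.
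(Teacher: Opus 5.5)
Your proposal is correct and follows essentially the same route as the paper: you use strict monotonicity and subadditivity to force $D(x,y)=a+b$, identify the remaining equality with $a\in[0,a+b]_{\Phi}$, and transfer it via segmental equivalence of the norms. The only difference is cosmetic: you phrase the argument as an if-and-only-if characterization of $[x,y]_{d_{\Phi}}$, whereas the paper proves $[x,y]_{d_{\Phi}}\subseteq[x,y]_{d_{\Psi}}$ and obtains the reverse inclusion by symmetry.
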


\vspace{0.3 cm}

\begin{proof}
Note first that $d_\Phi, d_\Psi$ are trivially symmetric and that the triangle inequality is forced by the monotonicity of the norms. As $d_\Phi, d_\Psi$ were assumed to additionally be positive-definite, we have that $d_{\Phi},d_{\Psi}$ are valid metrics on $X$. Furthermore, fix $x,y,z \in X$ and assume $z \in [x,y]_{d_{\Phi}}$. Under the labeling

\begin{align*}
\mathbf{p}&=(d_1(x,z),...,d_n(x,z))\\
\mathbf{q}&=(d_1(z,y),...,d_n(z,y))\\
\mathbf{r}&=(d_1(x,y),...,d_n(x,y)),\\
\end{align*}

\noindent our assumption is equivalent to the statement $\Phi(\mathbf{p})+\Phi(\mathbf{q})=\Phi(\mathbf{r})$. Applying the triangle inequality to each of the pseudometrics $d_i$, we have $\mathbf{r} \leq \mathbf{p}+\mathbf{q}$ coordinatewise. Now, for the sake of contradiction, assume $\mathbf{r} \neq  \mathbf{p}+\mathbf{q}$. Then, the strict monotonicity of $\Phi$ implies $\Phi(\mathbf{r})<\Phi(\mathbf{p}+\mathbf{q})$. Furthermore, the subadditivity of $\Phi$ gives $\Phi(\mathbf{p}+\mathbf{q})\leq \Phi(\mathbf{p})+\Phi(\mathbf{q}) $. Thus, $\Phi(\mathbf{r})<\Phi(\mathbf{p})+\Phi(\mathbf{q})$, contradicting our initial assumption. Hence, $\mathbf{p}+\mathbf{q}= \mathbf{r}$. It follows that $\Phi(\mathbf{p})+\Phi(\mathbf{q})=\Phi(\mathbf{p}+\mathbf{q})$, which is equivalent to the statement $\mathbf{p} \in  [\mathbf{0}, \mathbf{p} +\mathbf{q} ]_{\Phi}$. However, as $\Phi, \Psi$ were assumed to be segmentally equivalent norms, we also have that $\mathbf{p} \in  [\mathbf{0}, \mathbf{p} +\mathbf{q} ]_{\Psi}$, which is equivalent to the statement $\Psi(\mathbf{p})+\Psi(\mathbf{q})=\Psi(\mathbf{p}+\mathbf{q})$. Using the relation $\mathbf{p}+\mathbf{q}= \mathbf{r}$, we get that $\Psi(\mathbf{p})+\Psi(\mathbf{q})=\Psi(\mathbf{r})$, so $z \in [x,y]_{d_{\Psi}}$, and $[x,y]_{d_\Phi} \subseteq [x,y]_{d_\Psi}$. The reverse inclusion $[x,y]_{d_\Psi} \subseteq [x,y]_{d_\Phi}$ follows by symmetry. Thus, $d_{\Phi}$ and $d_{\Psi}$ are segmentally equivalent, as claimed.
\end{proof}

\noindent \\ Proposition~\ref{prop4.16} immediately implies the following useful corollaries:\\

\begin{corollary}\label{cor4.17}
Let $d_1,...,d_n$ be metrics on a set $X$ and let $w_1,...,w_n, w_1',...,w_n'>0$. Then, the metrics $d=\sum_{i=1}^{n}w_id_i$ and $d'=\sum_{i=1}^{n}w'_id_i$ are segmentally equivalent.
\end{corollary}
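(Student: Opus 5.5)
We will derive Corollary~\ref{cor4.17} directly from Proposition~\ref{prop4.16}, taking $\Phi$ and $\Psi$ to be weighted $\ell^1$ norms on $\mathbb{R}^n$:
$$\Phi(t_1,\dots,t_n)=\sum_{i=1}^n w_i|t_i|, \qquad \Psi(t_1,\dots,t_n)=\sum_{i=1}^n w_i'|t_i|.$$
With these choices $d_\Phi=d$ and $d_\Psi=d'$. The $d_i$ are metrics, hence pseudometrics. Positive-definiteness of $d_\Phi$ and $d_\Psi$ holds because all weights are positive and each $d_i$ is already positive-definite.

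Next we check the remaining hypotheses of Proposition~\ref{prop4.16}. Both $\Phi$ and $\Psi$ are norms, since all weights are strictly positive. On $\mathbb{R}^n_{\geq 0}$ they are linear with strictly positive coefficients. Hence if $\mathbf{a}\leq \mathbf{b}$ coordinatewise and $\mathbf{a}\neq\mathbf{b}$, some coordinate strictly increases and $\Phi(\mathbf{a})<\Phi(\mathbf{b})$. The same holds for $\Psi$, so both are strictly monotone.

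The only substantive step is to show that $\Phi$ and $\Psi$ are segmentally equivalent on $\mathbb{R}^n$. We compute the metric segments directly. Given $\mathbf{u},\mathbf{v},\mathbf{z}\in\mathbb{R}^n$, we have $\mathbf{z}\in[\mathbf{u},\mathbf{v}]_\Phi$ if and only if
$$\sum_i w_i\bigl(|u_i-z_i|+|z_i-v_i|-|u_i-v_i|\bigr)=0.$$
Each bracketed term is nonnegative by the triangle inequality in $\mathbb{R}$, and every $w_i$ is positive. So the sum vanishes exactly when each bracket vanishes, that is, when $z_i$ lies between $u_i$ and $v_i$ for every $i$. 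This condition does not involve the weights. Therefore $[\mathbf{u},\mathbf{v}]_\Phi=[\mathbf{u},\mathbf{v}]_\Psi$, and both equal the coordinate box spanned by $\mathbf{u}$ and $\mathbf{v}$.

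Proposition~\ref{prop4.16} now applies and yields that $d$ and $d'$ are segmentally equivalent. No step is a real obstacle; the only point needing care is the segment computation above.
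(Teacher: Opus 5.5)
Your proof is correct and follows essentially the same route as the paper: weighted $\ell^1$ norms $\Phi,\Psi$, strict monotonicity on $\mathbb{R}^n_{\geq 0}$, and the termwise-nonnegativity argument showing that the norms' segments coincide, followed by an application of Proposition~\ref{prop4.16}. The only cosmetic difference is that you compute $[\mathbf{u},\mathbf{v}]$ directly, whereas the paper reduces to $[\mathbf{0},\mathbf{p}]$ via translation invariance.
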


\vspace{0.3 cm}

\begin{proof}
Let $\Phi,\Psi: \mathbb{R}^n \rightarrow \mathbb{R}$ be maps defined by the rules

\begin{align*}
& \Phi: (p_1,...,p_n) \mapsto \left\| (w_1p_1,...,w_np_n)\right\|_1\\
&\Psi: (p_1,...,p_n) \mapsto \left\| (w'_1p_1,...,w'_np_n)\right\|_1,
\end{align*}

\noindent \\ where $\left\| \cdot \right\|_1$ is the $\ell^1$ norm on $\mathbb{R}^n$. Clearly, $\Phi,\Psi$ are norms on $\mathbb{R}^n$, and their restrictions to $\mathbb{R}_{\geq0}^n$ are strictly monotone.
Furthermore, we claim that $\Phi,\Psi$ are segmentally equivalent. By translation invariance of the norms, it sufficies to establish that $[\mathbf{0}, \mathbf{p}]_{\Phi}=[\mathbf{0}, \mathbf{p}]_{\Psi}$ for all $\mathbf{p}=(p_1,...,p_n) \in \mathbb{R}^n$. Thus, let $\mathbf{q}=(q_1,...,q_n) \in [\mathbf{0}, \mathbf{p}]_{\Phi}$. By definition, we have that $\Phi(\mathbf{q})+\Phi(\mathbf{p}-\mathbf{q})=\Phi(\mathbf{p})$, which is equivalent to the statement that

$$\sum_{i=1}^{n}w_i (\left | p_i-q_i\right |+\left | q_i\right |-\left | p_i\right |)=0.$$

\noindent \\ However, as each of the summands is non-negative, it follows that $\left | p_i-q_i\right |+\left | q_i\right |-\left | p_i\right |=0$ for all $1 \leq i \leq n$. Hence, we also have that

$$\sum_{i=1}^{n}w'_i (\left | p_i-q_i\right |+\left | q_i\right |-\left | p_i\right |)=0,$$

\noindent \\ which is equivalent to the statement $\Psi(\mathbf{q})+\Psi(\mathbf{p}-\mathbf{q})=\Psi(\mathbf{p})$. Thus, $\mathbf{q}\in [\mathbf{0}, \mathbf{p}]_{\Psi}$, and $[\mathbf{0}, \mathbf{p}]_{\Phi} \subseteq [\mathbf{0}, \mathbf{p}]_{\Psi}$. The reverse inclusion $[\mathbf{0},\mathbf{p}]_{\Psi} \subseteq [\mathbf{0}, \mathbf{p}]_{\Phi}$ follows by interchanging $\Phi, \Psi$. By our previous argument, we conclude that $\Phi$ and $\Psi$ are segmentally equivalent. Finally, our claim follows from Proposition~\ref{prop4.16} and the fact that $d=d_{\Phi}$ and $d'=d_{\Psi}$. 
\end{proof}

\vspace{0.3 cm}

\begin{corollary}\label{cor4.18}
Let $d_1,...,d_n$ be pseudometrics on a set $X$ and let $\Phi,\Psi$ be strictly convex norms whose restrictions to $\mathbb{R}_{\geq 0}^n$ are strictly monotone under the coordinatewise order. If the maps $d_{\Phi}, d_{\Psi}$ as defined in Proposition~\ref{prop4.16} are positive-definite, then $d_{\Phi}$ and $d_{\Psi}$ are segmentally equivalent metrics.
\end{corollary}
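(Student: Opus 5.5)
This is a direct consequence of Proposition~\ref{prop4.16}: it suffices to show that $\Phi$ and $\Psi$ are segmentally equivalent norms on $\mathbb{R}^n$. All the other hypotheses of Proposition~\ref{prop4.16} are assumed verbatim: strict monotonicity on $\mathbb{R}^n_{\geq 0}$ under the coordinatewise order, and positive-definiteness of $d_\Phi, d_\Psi$.

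To get segmental equivalence, I will invoke Proposition~\ref{prop1.1} for each norm separately. Since $(\mathbb{R}^n,\Phi)$ is a strictly convex normed space, Proposition~\ref{prop1.1} gives
$$[\mathbf{u},\mathbf{v}]_{\Phi}=\left\{(1-t)\mathbf{u}+t\mathbf{v} : t\in[0,1]\right\}$$
for all $\mathbf{u},\mathbf{v}\in\mathbb{R}^n$. The same identity holds with $\Psi$ in place of $\Phi$. The right-hand side is the linear segment, which does not depend on the norm. Hence $[\mathbf{u},\mathbf{v}]_\Phi=[\mathbf{u},\mathbf{v}]_\Psi$ for all $\mathbf{u},\mathbf{v}$, so $\Phi$ and $\Psi$ are segmentally equivalent. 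This is exactly the remark made before the definition of segmentally discrete metrics, that any two strictly convex norms induce segmentally equivalent metrics.

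With this established, Proposition~\ref{prop4.16} applies directly and yields that $d_\Phi$ and $d_\Psi$ are segmentally equivalent metrics.

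There is no real obstacle. The one point to check is that ``strictly convex norm'' is meant in the normed-space sense, i.e.\ the closed unit ball is strictly convex. That is precisely the hypothesis of Proposition~\ref{prop1.1}. By the discussion after Definition~\ref{def3.1}, it also agrees with metric strict convexity, so either reading gives the same conclusion.
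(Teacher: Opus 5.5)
Your proposal is correct and follows the paper's own proof. Proposition~\ref{prop1.1} identifies the metric segments of both strictly convex norms with the linear segments, so $\Phi$ and $\Psi$ are segmentally equivalent, and Proposition~\ref{prop4.16} then gives the conclusion.
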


\vspace{0.3 cm}

\begin{proof}
By Proposition~\ref{prop1.1}, we easily conclude that $\Phi$ and $\Psi$ are segmentally equivalent. Our claim then follows from Proposition~\ref{prop4.16}.
\end{proof}

\noindent \\ For instance, an application of Corollary~\ref{cor4.18} gives that the $\ell^p$ sums

\begin{align*}
d&=\left ( \sum_{i=1}^{n}d_i^{p_1} \right )^{1\slash p_1}\\
d'&=\left ( \sum_{i=1}^{n}d_i^{p_2} \right )^{1\slash p_2}
\end{align*}

\vspace{0.3 cm}

\noindent of metrics $d_1,...,d_n$ are segmentally equivalent for all (finite) $p_1,p_2>1$ as a consequence of the strict convexity of the $\ell^{p}$ norm on $\mathbb{R}^n$ in the case that $p>1$. A similar and more useful consequence of Corollary~\ref{cor4.18} is the fact that the $\ell^p$ products

\begin{align*}
d_{p_1}: (x_1, ..., x_n), (y_1, ..., y_n)& \mapsto \left ( \sum_{i=1}^{n}d_i(x_i,y_i)^{p_1} \right )^{1\slash p_1}\\
d_{p_2}: (x_1, ..., x_n), (y_1, ..., y_n)& \mapsto \left ( \sum_{i=1}^{n}d_i(x_i,y_i)^{p_2} \right )^{1\slash p_2}
\end{align*}

\vspace{0.3 cm}

\noindent of metrics $d_1,...,d_n$ defined on sets $X_1,...,X_n$, respectively, are segmentally equivalent on $X=\prod_{i=1}^{n}X_i$ for all $p_1,p_2>1$. Consequently, $\mathcal{C}_{d_{p_1}}=\mathcal{C}_{d_{p_2}}$. Here, we will use $\mathcal{C}^{*}$ to  represent the convexity induced by any $\ell^p$ product of the metrics $d_1,...,d_n$ (for $p>1$).\\

\noindent On another note, the category $\textbf{Conv}$ is known to be topological over $\textbf{Set}$ (see \cite[Theorem 3.3]{wang2019categorical}) and, hence, bicomplete. Thus, additionally, we can consider the categorical product $\mathcal{C}=\prod_{i=1}^n \mathcal{C}_{d_i}$ of the individual convexities, which is generated by sets of the form $\prod_{i=1}^{n}C_i, \ C_i \in \mathcal{C}_{d_i}$ (see \cite[\S1]{van1993theory})\footnote{Here, by generating, we mean iterating arbitrary intersections and directed unions.}. Using the strict monotonicity of the $\ell^2$ norm on $\mathbb{R}^n$, one can easily observe that each generating set $\prod_{i=1}^{n}C_i$ is contained in $\mathcal{C}^*{}$. Consequently, $\mathcal{C}^{*}$ is finer than the product convexity $\mathcal{C}$. However, unlike the product convexity $\mathcal{C}$, the convexity $\mathcal{C}^{*}$ does not seem to be given by a universal property but, rather, appears to be functorial in nature i.e., dependent on the product in the category $\textbf{Met}$.

\vspace{0.3 cm}

\appendix
\section{Combinatorial sequences arising from $d$-convexity}

Given a ternary relation $B$ on a set $X$, it is natural to inquire whether $B$ can be realized as the betweenness relation induced by some metric on $X$. Such ternary relations are referred to as \emph{metrizable betweenness relations}. In a more general setting, one defines a \emph{betweenness relation} on $X$ to be a ternary relation $B$ satisfying expected geometric axioms such as\\

\begin{enumerate}
\itemsep0.3em
\item[(B0)] $B(x,y,x) \implies x=y$. \label{b0}
\item[(B1)] $B(x,x,y)$ \label{b1}
\item[(B2)] $B(x,y,z)\implies B(z,y,x)$ \label{b2}
\item[(B3)] $B(x,y,z) \wedge B(x,z,u) \implies B(x,y,u)$ \label{b3}
\item[(B4)] $B(x,y,z) \wedge B(x,z,u) \implies B(y,z,u)$ \label{b4}
\end{enumerate} 

\noindent \\ for all $x,y,z,u 
\in X$ (see \cite{mendris1995axiomatization}). Interpreting the problem as a linear feasibility problem and employing linear programming, we enumerated metrizable betweenness relations on finite sets. Let $b_n$ denote the number of (labeled) metrizable betweenness relations on an $n$-set and let $\overline{b}_n$ denote the number of such relations up to isomorphism. The first values are given in the table below:\\

\renewcommand{\arraystretch}{1.15}

\begin{table}[H]
\centering
\begin{tabular}{c|cccccc}
$n$ & 1 & 2 & 3 & 4 & 5 & 6\\
\hline
$b_n$ & 1 & 1 & 4 & 74 & 8628 & 7238428\\
$\overline{b}_n$ & 1 & 1 & 2 & 9 & 122 & $\geq 6778$
\end{tabular}
\caption{Number of (nonisomorphic) metrizable betweenness relations on $n$-sets.}
\label{table1}
\end{table}

\noindent \\ We have obtained the estimate $\overline{b}_6 \geq 6778$ using Monte Carlo sampling, but the exact value of $\overline{b}_6$ is currently unknown to us. The sequences $b_n$ and $\overline{b}_n$ have been entered into the OEIS as A395237 (\cite{A395237}) and A397184
(\cite{A397184}), respectively. Additionally, observe that $b_n$ also counts the number of segmental equivalence classes of metrics on an $n$-set. Indeed, metric segments in any metric space $(X,d)$ are uniquely determined by the induced betweenness relation $B$ via the identity $[x,y]=\left\{ z \in X: B(x,z,y)\right\}$ for all $x,y \in X$.\\

\noindent Our observation also allowed us to count metrizable convexities on finite sets. Formally, we say that a convexity $\mathcal{C}$ on a set $X$ is \emph{metrizable} when a metric $d$ can be defined on $X$ so that $\mathcal{C}=\mathcal{C}_d$. Specifically, we started from the counts of metrizable betweenness relations given in Table~\ref{table1}, evaluated the associated metric segments, and computed the induced convexities. Let $c_n$ denote the number of (labeled) metrizable convexities on an $n$-set and let $\overline{c}_n$ denote the number of such convexities up to isomorphism. The first values are given in the table below:\\

\renewcommand{\arraystretch}{1.15}

\begin{table}[H]
\centering
\begin{tabular}{c|cccccc}
$n$ & 1 & 2 & 3 & 4 & 5 & 6\\
\hline
$c_n$ & 1 & 1 & 4 & 74 & 8508 & 6460153\\
$\overline{c}_n$ & 1 & 1 & 2 & 9 & 119 & $\geq 6778$
\end{tabular}
\caption{Number of (nonisomorphic) metrizable convexities on $n$-sets.}
\label{table2}
\end{table}

\noindent \\ Analogously to $\overline{b}_6$, we have obtained the estimate $\overline{c}_6 \geq 6778$ using Monte Carlo sampling, but the exact value of $\overline{c}_6$ is currently unknown to us. The sequences $c_n$ and $\overline{c}_n$ have been entered into the OEIS as A395250 (\cite{A395250}) and A395485
(\cite{A395485}), respectively. Trivially, $c_n$ also counts the number of convex equivalence classes of metrics on an $n$-set.\\

\noindent We refer the interested reader to the works of Mendris \& Zlatoš (see \cite{mendris1995axiomatization}) and Szabó (see \cite{szabo2022betweenness}), who study metrizability of betweenness relations from the standpoints of logic and graph theory.

\vspace{0.3 cm}

\section*{Acknowledgments}
The author is indebted to Professor Deping Ye and Mario Stipcic for giving him an opportunity to hold talks on this work. The author is also thankful to Professor Rolf Schneider and Eike Hertel for encouraging the publication of this paper.

\vspace{0.3 cm}

\bibliographystyle{amsplain}
\bibliography{references}
\end{document}